\pgfplotsset{compat=1.6}
\pgfplotsset{soldot/.style={color=blue,only marks,mark=*}} \pgfplotsset{holdot/.style={color=blue,fill=white,only marks,mark=*}}
\numberwithin{equation}{section}
\numberwithin{equation}{section}
\DeclarePairedDelimiter\floor{\lfloor}{\rfloor}
\theoremstyle{plain}
\newtheorem{Th}{Theorem}[section]
\newtheorem{Lemma}[Th]{Lemma}
\newtheorem{Prop}[Th]{Proposition}
 \theoremstyle{definition}
\newtheorem{Def}[Th]{Definition}
\newtheorem{Rem}[Th]{Remark}
\newtheorem{?}[Th]{Problem}
\renewcommand\hat{\widehat}
\def\XXint#1#2#3{{\setbox0=\hbox{$#1{#2#3}{\int}$ }
\vcenter{\hbox{$#2#3$ }}\kern-.6\wd0}}
\newcounter{cnstcnt}
\title{On the number and size of holes in the growing ball of first-passage percolation}
\author{Michael Damron \thanks{Email: mdamron6@protonmail.com. The research of M. D. is supported by an NSF grant DMS-2054559 and an NSF CAREER award.} \\ \small{Georgia Tech}  \and Julian Gold \thanks{Email: julian.thomas.gold@gmail.com. The research of J. G. is supported by NSF Postdoctoral Research Fellowship DMS-1803622}\\ \small{Northwestern University} \and Wai-Kit Lam\thanks{Email: waikitlam@ntu.edu.tw. The research of W.-K. L. is supported by Ministry of Science and Technology in Taiwan Grant MOST110-2115-M002-012-MY3 and NTU New Faculty Founding Research Grant NTU-111L7452.} \\ \small{National Taiwan University} \and Xiao Shen \thanks{Email: xiao.shen@utah.edu.} \\ \small{University of Utah} }
\begin{document}
	
	\maketitle 
	\begin{abstract}
		First-passage percolation is a random growth model defined on $\mathbb{Z}^d$ using i.i.d.~nonnegative weights $(\tau_e)$ on the edges. Letting $T(x,y)$ be the distance between vertices $x$ and $y$ induced by the weights, we study the random ball of radius $t$ centered at the origin, $B(t) = \{x \in \mathbb{Z}^d : T(0,x) \leq t\}$. It is known that for all such $\tau_e$, the number of vertices (volume) of $B(t)$ is at least order $t^d$, and under mild conditions on $\tau_e$, this volume grows like a deterministic constant times $t^d$. Defining a hole in $B(t)$ to be a bounded component of the complement $B(t)^c$, we prove that if $\tau_e$ is not deterministic, then a.s., for all large $t$, $B(t)$ has at least $ct^{d-1}$ many holes, and the maximal volume of any hole is at least $c\log t$. Conditionally on the (unproved) uniform curvature assumption, we prove that a.s., for all large $t$, the number of holes is at most $(\log t)^C t^{d-1}$, and for $d=2$, no hole in $B(t)$ has volume larger than $(\log t)^C$. Without curvature, we show that no hole has volume larger than $Ct \log t$.
	\end{abstract}
	
\section{Introduction}

%{\color{green}
%\begin{enumerate}
%\item Remarks on upper bounds in higher dimensions?
%\item go over paper
%\end{enumerate}
%}

\subsection{Backgound and definitions}

In the '60s, Hammersley-Welsh introduced first-passage percolation (FPP) on the cubic lattice $\mathbb{Z}^d$ as model for fluid flow in a porous medium. FPP is now often viewed in other ways: as a random growth model, a particle system, or a random metric space; see \cite{ADH17,DRS16} for recent surveys. In addition to the usual questions, like passage time asymptotics, the geometry of geodesics, and concentration bounds, attention has recently been paid to the boundary of the growing set $B(t)$ \cite{B15,DHL17,L85} and its topological properties \cite{MRS20}. The purpose of the current paper is to continue some of these newer questions, addressing the number and size of holes in $B(t)$.

Consider $\mathbb{Z}^d$, the $d$-dimensional integer lattice with nearest-neighbor edges $\mathbb{E}^d$. Let $(\tau_e)_{e \in \mathbb{E}^d}$ be an i.i.d.~family of nonnegative random variables (the edge-weights) assigned to the edges. A path from a vertex $x$ to a vertex $y$ is an alternating sequence $x = x_0, e_0, x_1, e_1, \dots, x_{n-1},e_{n-1},x_n = y$ of vertices and edges such that $e_i = \{x_i,x_{i+1}\} \in \mathbb{E}^d$ for all $i = 0, \dots, n-1$. The passage time of a path $\gamma$ is
\[
T(\gamma) = \sum_{i=0}^{n-1} \tau_{e_i},
\]
and the first-passage time from $x$ to $y$ is
\[
T(x,y) = \inf_{\gamma: x \to y} T(\gamma),
\]
where the infimum is over all paths $\gamma$ from $x$ to $y$.

We study the random ``ball''
\[
B(t) = \{x \in \mathbb{Z}^d : T(0,x) \leq t\} \text{ for } t \geq 0.
\]
The shape theorem of FPP gives a type of law of large numbers for $B(t)$, and states that the rescaled set $B(t)/t$ converges to a deterministic limiting shape as $t \to \infty$. The usual assumptions are that
\begin{equation}\label{eq: percolation_assumption}
\mathbb{P}(\tau_e = 0) < p_c(d),
\end{equation}
where $p_c(d)$ is the critical value for $d$-dimensional Bernoulli bond percolation (a constant known to be in the open interval $(0,1)$ for $d \geq 2$ and to be equal to $1/2$ for $d=2$), and $\mathbb{E}\min\{\tau_1^d, \dots, \tau_{2d}^d\}<\infty$, where the $\tau_i$ are i.i.d.~copies of $\tau_e$. Under these conditions \cite[Thm.~1.7]{aspects}, there exists a nonrandom convex set $\mathcal{B}$ which is invariant under permutations of the coordinates and under reflections in the coordinate hyperplanes, has nonempty interior, and which is compact, such that for all $\epsilon>0$,
\begin{equation}\label{eq: shape_theorem}
\mathbb{P}\left( (1-\epsilon)\mathcal{B} \subset \frac{1}{t} \widetilde{B}(t) \subset (1+\epsilon)\mathcal{B} \text{ for all large }t\right) = 1.
\end{equation}
Here, $\widetilde{B}(t)$ is the sum set $\{x+y : x \in B(t), y \in [0,1)^d\}$. This $\mathcal{B}$ is the unit ball of a norm $g$ on $\mathbb{R}^d$:
\[
\mathcal{B} = \{z \in \mathbb{R}^d : g(z) \leq 1\}.
\]
Hence, in the limit, the set $B(t)$ has no holes, but holes may be present for finite $t$.

Only assuming \eqref{eq: percolation_assumption}, Kesten's lemma \cite[Lem.~5.8]{aspects} implies that $B(t)$ is a.s.~finite for each $t \geq 0$ and so the complement $B(t)^c$ is a union of finitely many connected components. All but one of these components is finite. We then define the number of ``holes'' in $B(t)$ as
\[
N(t) = \text{number of finite connected components of }B(t)^c
\]
and the volume of the largest hole as
\[
M(t) = \max\left\{ \#S : S \text{ is a finite connected component of }B(t)^c\right\}.
\]
If $\tau_e$ is deterministic, then $N(t) = 0$ for all $t$, so we will assume
\begin{equation}\label{eq: non_trivial}
\text{the distribution of }\tau_e \text{ is non-trivial}.
\end{equation}
In other words, the support of the distribution of $\tau_e$ contains at least two points.

\subsection{Main results}	
	
Our results give upper and lower bounds on $N(t)$ and $M(t)$ under some conditions on the weights $(\tau_e)$. First are the lower bounds.

\begin{Th}\label{thm: lower_bounds}
Suppose \eqref{eq: percolation_assumption} and \eqref{eq: non_trivial} hold.
\begin{enumerate}
\item There exists $c>0$ such that 
\[
\mathbb{P}\left( M(t) \geq c \log t \text{ for all large }t \right) = 1.
\]
\item There exists $c>0$ such that
\[
\mathbb{P}\left( N(t) \geq c t^{d-1} \text{ for all large }t \right) = 1.
\]
\end{enumerate}
\end{Th}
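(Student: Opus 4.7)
The plan for both parts is a local configuration argument. By \eqref{eq: non_trivial} we can pick $a<b$ in the support of $\tau_e$ and $\eta>0$ small with $a+\eta<b-\eta$. For a box $Q_L(v)=v+[-L,L]^d$ centered at a lattice point $v$, let $E(v,L)$ be the event that every edge in the core $v+[-L/2,L/2]^d$ has weight in $[b-\eta,b+\eta]$ (``slow core'') and every edge in the surrounding annulus has weight in $[a,a+\eta]$ (``fast shell''). Then $\mathbb{P}(E(v,L))\geq \rho^{L^d}$ for some $\rho\in(0,1)$, and since $E(v,L)$ depends only on edges in $Q_L(v)$, disjoint boxes give independent events.

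The geometric heart of the argument is: on $E(v,L)$ with $v$ at distance $\gtrsim t$ from the origin, there is an interval of $t$-values of length $\gtrsim L$ on which $B(t)$ has a hole of volume $\gtrsim L^d$ inside the core of $Q_L(v)$. Any path from $0$ to a core vertex $u$ uses at least $L/4$ slow edges after its last entry into the core, so $T(0,u)\geq \min_{w\in\partial Q_L(v)}T(0,w)+cL$. Any two outer-boundary vertices $w,w'$ of $Q_L(v)$ are connected by a fast-shell path of length $O(L)$ with weights at most $a+\eta$, so $\max_w T(0,w)\leq \min_w T(0,w)+CL(a+\eta)$. Choosing $(a,b,\eta)$ so the core penalty beats the shell bypass produces a gap of size $\gtrsim L$; the delicate case $\inf\supp(\tau_e)>0$ is handled by making the slow wall thicker and choosing $b$ sufficiently large in the support (feasible by non-triviality so that traversing the wall is strictly costlier than the ambient detour).

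Granted the claim, part (2) follows by fixing $L=L_0$ constant so that $\mathbb{P}(E(v,L_0))\geq p_0>0$, then, via the shape theorem, picking $\gtrsim t^{d-1}$ pairwise disjoint candidate boxes centered at points $v$ with $g(v)\approx t$ (so that $t$ lies in each box's ``hole interval''). The events $E(v,L_0)$ are independent, so by the weak law of large numbers at least $ct^{d-1}$ of them occur, producing that many disjoint holes; Borel--Cantelli along $t=2^n$ combined with a monotonicity argument for intermediate $t$ gives the almost sure statement. For part (1), take $L=L(t)=(c_1\log t)^{1/d}$ so that $\mathbb{P}(E(v,L))\geq t^{-c_2}$ with $c_2$ small; cover a shell of width $\sim L$ near $\partial B(t)$ by $\gtrsim t^{d-1}/L^{d-1}$ disjoint candidate boxes and apply a second-moment calculation plus Borel--Cantelli along $t=2^n$ to conclude that at least one box succeeds for all large $t$ almost surely, producing a hole of volume $\gtrsim L^d\gtrsim \log t$.

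The main obstacle is the geometric claim. The slow-core penalty $\gtrsim L$ must dominate both the Lipschitz variation of $g$ across $Q_L(v)$ and the random fluctuations of the passage-time field on the box boundary. The most delicate regime is when $\supp(\tau_e)$ is bounded away from zero and concentrated on a narrow interval, where the ``fast shell'' cannot be made arbitrarily fast; in that case one must arrange a slow wall whose thickness makes going through strictly costlier than going around via the ambient (unmodified) environment.
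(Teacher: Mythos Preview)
Your local-configuration idea is close in spirit to the paper's, but the placement step has a genuine gap. You propose to select boxes $Q_L(v)$ with $g(v)\approx t$ ``via the shape theorem'' so that $t$ lies in each box's hole interval. That interval is $[\min_{w\in\partial Q_L(v)}T(0,w)+c_1L,\ \min_{w}T(0,w)+c_2L]$, hence has width $O(L)$ and random location determined by $T(0,\partial Q_L(v))$. Under the hypotheses \eqref{eq: percolation_assumption} and \eqref{eq: non_trivial} alone there is no moment condition, so the shape theorem need not hold at all; and even when it does, it places $T(0,w)$ only within $o(t)$ of $g(w)$, which is enormously larger than $L$ (constant or $(\log t)^{1/d}$). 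So there is no reason any prescribed $t$ should fall in the hole interval of a box chosen by its $g$-value. The paper resolves this by \emph{conditioning on $B(t)=B$} and then deterministically finding many translates $x+\Lambda(n)$ with $x+\Lambda(n)\subset B^c$, $x+\Lambda(n+1)\cap B\neq\emptyset$, and a short path to a low-weight boundary edge of $B$ (Proposition~\ref{prop: placement_prop}). This pins the entry time of the box to within $O(\sqrt{n})$ of $t$ regardless of fluctuations, and because $\{B(t)=B\}$ depends only on edges with an endpoint in $B$, the configuration inside the boxes remains independent.

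Two further issues ride on this. First, your passage from a subsequence $t=2^n$ to all $t$ via ``monotonicity'' does not work: $N(t)$ and $M(t)$ are not monotone in $t$ (holes close). The paper instead shows each successful box yields a hole throughout an interval $[t+\kappa_t,\ t+\kappa_t+\epsilon^4 n_t]$ of fixed positive length, and checks these intervals overlap for consecutive integer $t$. Second, your treatment of the narrow-support case (``make the slow wall thicker'') is where the construction actually bites: with a cubical slow core a thick wall also lengthens the fast detour, so the inequality you need does not obviously hold. The paper's event $E_n$ uses an anisotropic rectangle $R(m_1,m_2,m_3)$ with $m_2\ll m_3\ll m_1\ll n$ and a low-weight line $L$ feeding into $\hat R$, engineered precisely so that Lemma~\ref{lem: barrel} yields a strict gap $(b-a)m_2$ between the time to encircle $R$ and the time to reach its center, for \emph{any} non-trivial distribution. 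Your sketch would need a comparable construction, and that is the real work.
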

\noindent
The proof of Thm.~\ref{thm: lower_bounds}
%involve explicitly constructing holes in $B(t)$ and will be somewhat different depending on whether or not the weights $\tau_e$ are bounded above. These arguments
appears in Section~\ref{sec: lower_bounds}. Close inspection of the proof reveals that a.s., for all large $t$, the number of holes of $B(t)$ of volume at least $c \log t$ is at least $t^{d-1-\alpha}$ for some $\alpha$ which satisfies $\alpha(c) \to 0$ as $c \to 0$. 

%\begin{Rem}
The authors of \cite{MRS20} study the Betti numbers associated with the growing set in the Eden model, a simple model for cell growth. Their results give asymptotics for these numbers and, in particular, show that with high probability, the number of holes at time $t$ is the same order as the perimeter, which is at least $t^{d-1}$. The same bound therefore holds for a site-FPP model with exponential weights, because it is equivalent, through the memoryless property of exponentials, to the Eden model. Our proof of item 2 of Theorem~\ref{thm: lower_bounds} has a similar structure to theirs. For large $t$, we condition on $B(t)$ and find order $t^{d-1}$ many disjoint sets in $B(t)^c$ near the boundary of $B(t)$. Each such set has a positive probability to contain a special configuration that will develop into a hole in $B(t+C)$ for a constant $C>0$. Because we cannot use the memoryless property, finding and constructing these holes is more complicated. First, if the weights are bounded, we cannot just create a hole by increasing the weights of the $2d$ edges incident to a particular vertex in $B(t)^c$. Instead, in step 1 of the proof, we must define a more detailed high-weight event that ensures the existence of holes. Second, if the weights are unbounded, high-weight boundary edges may prevent $B(t+C)$ from enveloping our high-weight configurations outside $B(t)$ in constant time. We must therefore show in step 2 that for large $t$, the boundary of $B(t)$ contains many sections of low-weight edges that are near large areas in $B(t)^c$. 
%\end{Rem}

\begin{Rem}
Holes in $B(t)$ were also previously studied in the proof of lower bounds on the size of the edge boundary of $B(t)$ in \cite[Thm.~1.3]{DHL17}. Their argument involves constructing order $t^d(1-F_Y(t))$ many unit-size holes in $B(t)$, where $F_Y$ is the distribution function of $\min\{\tau_1, \dots, \tau_{2d}\}$ and the $\tau_i$ are i.i.d.~copies of $\tau_e$. These holes arise from isolated vertices all of whose incident edges have high weight. When $\tau_e$ has a heavy tail, this number can be made arbitrarily close to $t^d$. The strategy from \cite{DHL17} does not obviously extend to lighter-tailed distributions, and the holes built in the proof of Thm.~\ref{thm: lower_bounds} above arise instead from large regions of slightly large edge-weights.
\end{Rem}

\begin{Rem}
As mentioned, if we remove assumption \eqref{eq: non_trivial}, we obtain $N(t) = 0$ for all $t$. Regarding assumption \eqref{eq: percolation_assumption}, if $F(0) > p_c$ but \eqref{eq: non_trivial} holds (that is, $\tau_e$ is not identically zero), then there is an infinite component of zero-weight edges a.s., and $\#B(t) = \infty$ for all large $t$. In addition, we have $N(t) = M(t) = \infty$ for all such $t$ a.s. On the other hand, the situation when $F(0) = p_c$ is more complicated because the growth rate of $B(t)$ can depend on the distribution of $\tau_e$ \cite{DLW17}. For some $\tau_e$, we still have $\#B(t) = \infty$ for all large $t$ (and so $N(t) = M(t) = \infty$), but for others, $\#B(t) <\infty$ for all $t$ a.s. Our proof of Thm.~\ref{thm: lower_bounds} can be used for $d=2$ to give a (probably nonoptimal) lower bound for $N(t)$ in terms of the growth rate of $B(t)$. For $d \geq 3$, there is not currently a simple condition on $\tau_e$ to determine if $\#B(t) = \infty$ for finite $t$. For these reasons, we leave this case for a future study.
\end{Rem}

Turning to upper bounds, each bounded component of $B(t)^c$ contributes at least one edge to the edge boundary of $B(t)$
\[
\partial_eB(t) = \{\{x,y\} : x \in B(t), y \notin B(t)\}.
\]
Therefore $N(t) \leq \#\partial_e B(t)$, and any upper bound for the size of the edge boundary holds also for $N(t)$. In \cite{DHL17}, Damron-Hanson-Lam gave some such inequalities, proving in particular that if $Y$ is the minimum of $2d$ many i.i.d.~edge-weights, then $\#\partial_e B(t)$ is at most order $t^{d-1} \mathbb{E}\min\{Y,t\}$ for ``most'' times (see \cite[Thm.~1.2]{DHL17}). This gives a weak complement to the inequality in item 2 of Thm.~\ref{thm: lower_bounds} when $\mathbb{E}Y<\infty$. We focus instead on a different result of \cite{DHL17} which involves the ``uniform curvature condition'' of Newman. This condition is unproved, but believed to be true for distributions of $\tau_e$ that are, say, continuous; see \cite[Sec.~2.8]{ADH17} for more details. 
\begin{Def}\label{def: uniform_curvature}
We say that the limit shape $\mathcal{B}$ satisfies the uniform curvature condition if there exist constants $c>0, \eta > 0$ such that for all $z_1,z_2 \in \partial \mathcal{B}$ and $z = (1-\lambda)z_1 + \lambda z_2$ with $\lambda \in [0,1]$,
\[
1-g(z) \geq c\min\{g(z-z_1),g(z-z_2)\}^\eta,
\]
where $g$ is the norm associated to $\mathcal{B}$.
\end{Def}
This condition is typically used in concert with an exponential moment condition:
\begin{equation}\label{eq: exponential_moments}
\mathbb{E}e^{\alpha \tau_e} < \infty \text{ for some } \alpha>0,
\end{equation}
but it is possible to define $\mathcal{B}$ and therefore uniform curvature without any moment condition on $\tau_e$.

As a consequence of the bound on $\#\partial_e B(t)$ from \cite[Thm.~1.5]{DHL17}, we immediately obtain the following.
\begin{Prop}
Suppose \eqref{eq: percolation_assumption} and \eqref{eq: exponential_moments} hold, and assume the uniform curvature condition for $\mathcal{B}$. There exists $C>0$ such that
\[
\mathbb{P}\left( N(t) \leq (\log t)^C t^{d-1} \text{ for all large }t\right) = 1.
\]
\end{Prop}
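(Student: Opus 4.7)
The plan is straightforward because the proposition is framed as an immediate consequence of \cite[Thm.~1.5]{DHL17}. The first step is to record the deterministic pointwise inequality $N(t) \leq \#\partial_e B(t)$ already noted in the paragraph preceding the statement. If $S$ is a finite connected component of $B(t)^c$, then $S$ is nonempty and $\mathbb{Z}^d$ is connected, so there must be at least one edge $\{x,y\}$ with $x \in S \subset B(t)^c$ and $y \in B(t)$; such an edge lies in $\partial_e B(t)$. Assigning to each finite component one such boundary edge (say, the lexicographically smallest) gives an injection from the set of finite components of $B(t)^c$ into $\partial_e B(t)$, since distinct components yield edges with distinct $B(t)^c$-endpoints. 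Hence $N(t) \leq \#\partial_e B(t)$ for every realization and every $t$.

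The second step is to invoke \cite[Thm.~1.5]{DHL17}, which, under \eqref{eq: percolation_assumption}, the exponential moment condition \eqref{eq: exponential_moments}, and the uniform curvature condition of Definition~\ref{def: uniform_curvature}, furnishes a constant $C>0$ such that almost surely
\[
\#\partial_e B(t) \leq (\log t)^C\, t^{d-1} \quad \text{for all large } t.
\]
Combining this with the deterministic inequality from the first step yields $N(t) \leq (\log t)^C t^{d-1}$ almost surely for all large $t$, which is the desired conclusion.

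There is essentially no obstacle internal to this proof, since the real work, controlling the fluctuations of $\partial B(t)$ around $t\mathcal{B}$ via curvature and exponential-moment concentration of passage times, is already carried out in \cite{DHL17}. The only small point to double-check is that the injection above genuinely uses only that components are finite (so they admit a lexicographically smallest vertex and an adjacent boundary edge), and that the statement in \cite{DHL17} is indeed for an almost sure ``for all large $t$'' event rather than for fixed $t$; both are routine.
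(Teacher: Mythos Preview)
Your proposal is correct and matches the paper's approach exactly: the paper presents this proposition without a separate proof, stating it ``as a consequence of the bound on $\#\partial_e B(t)$ from \cite[Thm.~1.5]{DHL17},'' using precisely the deterministic inequality $N(t)\le \#\partial_e B(t)$ noted in the preceding paragraph. Your write-up simply makes this one-line deduction explicit.
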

This result does not directly imply a good upper bound on the maximal hole size $M(t)$. For that, we give the following result in two dimensions.
\begin{Th}\label{thm: curvature_upper_bound}
Let $d=2$. Suppose \eqref{eq: percolation_assumption} and \eqref{eq: exponential_moments} hold, and assume the uniform curvature condition for $\mathcal{B}$. There exists $C>0$ such that
\[
\mathbb{P}\left( M(t) \leq (\log t)^C \text{ for all large }t\right) = 1.
\]
\end{Th}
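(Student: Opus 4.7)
The plan is a Borel--Cantelli argument on integer times, based on concentration of $T(0,\cdot)$ and uniform curvature; the extension to real $t$ is routine. First I would combine Alexander's bound on $\mathbb{E} T(0,x) - g(x)$ with Kesten's exponential concentration (both available under \eqref{eq: percolation_assumption} and \eqref{eq: exponential_moments}) to assert that, a.s.\ for all large $t$,
\[
(t - \phi(t))\mathcal{B} \subseteq B(t) \subseteq (t + \phi(t))\mathcal{B}, \qquad \phi(t) := C_0 \sqrt{t}\log t.
\]
In particular every hole is confined to the shell $S_t = \{z : t - \phi(t) \leq g(z) \leq t + \phi(t)\}$.

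Second, given a hypothetical hole $H \subset S_t$ of volume $V$, the planar isoperimetric inequality furnishes $w_1, w_2 \in H$ with $g(w_1 - w_2) \geq c_1 \sqrt{V}$. Projecting each $w_i$ radially to $w_i^\ast \in t \partial \mathcal{B}$ costs only $O(\phi(t))$ in the $g$-norm; applying the uniform curvature inequality to the midpoint of $w_1^\ast, w_2^\ast$ then shows that a lattice vertex $m$ within $O(1)$ of $(w_1 + w_2)/2$ satisfies
\[
g(m) \leq t - c_2 V^{\eta/2} t^{1-\eta} + O(\phi(t)).
\]

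Third, I case-split on whether $m \in H$. If so, then $T(0,m) > t$ forces a deviation $T(0,m) - g(m) \geq c_2 V^{\eta/2} t^{1 - \eta} - O(\phi(t))$, whose probability is controlled by Kesten's inequality $\mathbb{P}(T(0,m) - g(m) \geq s) \leq C\exp(-c \min(s^2/t, s))$. If instead $m \in B(t)$, then planarity forces $H$ to wrap around $m$; I apply the same chord-midpoint argument recursively to the connected ``halves'' of $H$ lying on the two sides of $m$ (up to $O(\log t)$ levels of recursion), eventually isolating a lattice vertex in $H$ at depth at least $c V^{\eta/2}t^{1-\eta}/(\log t)^{O(1)}$. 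A union bound over the $O(t^{O(1)})$ possible positions of the extracted vertex and choices of chord then shows $\mathbb{P}(M(t) > (\log t)^C) \leq t^{-2}$ for $C = C(\eta)$ large enough, which is summable over integer $t$.

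The main obstacle is the case $m \in B(t)$ in Step 3: one must use planarity carefully to recursively extract a deep vertex of $H$, controlling how much ``depth'' is lost at each level of the recursion so that the final deviation threshold stays well above the shape fluctuation scale $\phi(t)$. This topological bookkeeping in $\mathbb{Z}^2$, together with the quantitative curvature-concentration trade-off, is where the restriction to $d=2$ and the full strength of the uniform curvature hypothesis are essential; in higher dimensions, a hole can wrap $B(t)$-vertices from topologically richer directions, and the recursion does not close.
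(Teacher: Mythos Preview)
Your approach has a quantitative gap that the recursion cannot repair. Applying the uniform curvature inequality to endpoints $w_1^\ast,w_2^\ast\in t\partial\mathcal{B}$ separated by $g$-distance $d$ yields a midpoint at $g$-depth only of order $d^\eta/t^{\eta-1}$ below $t$. In the regime of interest $d\sim\sqrt{V}$ with $V=(\log t)^C$, so this depth is $(\log t)^{C\eta/2}/t^{\eta-1}=o(1)$; it is far below both the fluctuation scale $\phi(t)=\sqrt{t}\log t$ of Step~1 and the threshold $\sqrt{t\log t}$ at which the concentration bound $\mathbb{P}(T(0,m)-g(m)\ge s)\le e^{-cs^2/t}$ becomes summable. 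Each level of your recursion therefore gains essentially nothing, and the whole scheme only becomes non-trivial once $V\gtrsim t^{2-1/\eta}$ (for the expected $\eta=2$ this is $t^{3/2}$), which is weaker even than the no-curvature bound of Theorem~\ref{thm: no_curvature_upper_bound}. Separately, the claim that ``planarity forces $H$ to wrap around $m$'' when $m\in B(t)$ is not justified: $H$ can be an arc or an L-shape with $w_1,w_2$ at its ends and the chord midpoint $m$ simply lying in $B(t)\setminus H$, with no wrapping and no natural way to split $H$ into two connected halves.

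The paper exploits curvature through two tools that are genuinely sharper than chord geometry plus global concentration. For the \emph{radial} extent it invokes a Busemann-type estimate from \cite{DHL17} (proved under curvature): $\mathbb{P}\bigl(T(0,kz)-T(0,\ell z)\ge c(k-\ell)\bigr)\ge 1-Ce^{-(k-\ell)^{c}}$ for unit $z$. Since the extremal vertex $x_0$ of the hole and the radial translate $x_0'=(1-K)x_0$ on its near side both have passage time within $O(\log t)$ of $t_0$, this forces $K\|x_0\|_2\le(\log t)^{O(1)}$ directly. For the \emph{lateral} extent it uses Newman's geodesic-straightness lemma (again requiring curvature): with high probability every $z\in\text{out}(0,x)$ satisfies $|\theta(x,z)|\le C\|x\|_2^{-p}$. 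A geodesic from a point $y_0$ placed just beyond $x_0$ to $0$ must then stay in a narrow angular cone, and a planar crossing argument shows it would have to intersect the hole if the hole extended too far laterally --- a contradiction, since the geodesic would first re-enter $B(t_0)$. Both steps compare passage times of \emph{nearby} points (or constrain a single geodesic) rather than comparing one passage time to $g(x)$; this is precisely what lets them reach the polylogarithmic scale that the $\sqrt{t}\log t$ shape-fluctuation input cannot.
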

The proof of Thm.~\ref{thm: curvature_upper_bound} is in Section~\ref{sec: curvature_upper_bound}. The argument bounds the diameter of a hole in both the radial direction and the lateral direction by $(\log t)^C$. The radial estimate (see ``The first case ...'' above \eqref{eq: masta_marinara}) is valid in general dimensions. To bound the diameter in the lateral direction (below \eqref{eq: last_case}), we must use planarity to trap a hole between two geodesics. This second part of the proof only works for $d=2$. It would be interesting to study the geometry of holes in more detail. Do the largest holes have larger diameter in the radial direction than in the lateral one? Is there an asymptotic shape for these holes?

Without the curvature assumption, the method of proof of Thm.~\ref{thm: curvature_upper_bound} still works in some form, and produces the following weaker result. It gives a bound on the diameter of a hole in both the radial and lateral direction of order $\sqrt{t \log t}$. Its proof is in Section~\ref{sec: no_curvature_upper_bound}.
\begin{Th}\label{thm: no_curvature_upper_bound}
Let $d=2$. Suppose \eqref{eq: percolation_assumption} and \eqref{eq: exponential_moments} hold. There exists $C>0$ such that
\[
\mathbb{P}\left( M(t) \leq Ct \log t \text{ for all large }t\right) = 1.
\]
\end{Th}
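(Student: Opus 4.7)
The plan is to show that every hole $H$ in $B(t)$ has $\ell^\infty$-diameter at most $C\sqrt{t \log t}$; since such a set sits inside a box with $O(t \log t)$ lattice points, this yields the desired volume bound $\#H \leq C' t \log t$. The argument parallels the proof of Theorem~\ref{thm: curvature_upper_bound}, with the curvature-based refinements there replaced by weaker, curvature-free concentration estimates, so that $(\log t)^C$ becomes $O(\sqrt{t \log t})$ in both the radial and the lateral directions.

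For the radial estimate (valid in any dimension), the key input is Kesten-type concentration of passage times under \eqref{eq: exponential_moments}: $|T(0,y) - g(y)| \leq K\sqrt{|y|\log|y|}$ with high probability, uniformly for $|y|$ of order $t$. For any $x \in H$, the inequality $T(0,x) > t$ yields $g(x) \geq t - K\sqrt{t\log t}$. For a matching upper bound, I would pick a boundary vertex $z \in \partial H \subset B(t)$ together with a path in $H \cup \{z\}$ of length at most $D := \mathrm{diam}(H)$ from $z$ to $x$; since \eqref{eq: exponential_moments} forces the maximum edge weight in a ball of radius $Ct$ to be $O(\log t)$, we obtain $T(0, x) \leq t + O(D \log t)$ and hence $g(x) \leq t + O(D \log t) + O(\sqrt{t\log t})$. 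Thus $H$ lies in a $g$-annulus of width $O(D\log t + \sqrt{t\log t})$, which is consistent with a radial extent of $O(\sqrt{t\log t})$ once the companion diameter bound $D = O(\sqrt{t\log t})$ is in hand.

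The second ingredient, specific to $d = 2$, is the lateral estimate via a planar sandwich. Because $H$ is a bounded component of $B(t)^c$, the boundary $\partial H$ is a circuit in $B(t)$, and using the shape theorem one selects lattice points $y_L, y_R \in B(t)$ at angles just outside the angular range of $H$ and at radial distances slightly beyond the radial position of $H$. Geodesics $\gamma_L, \gamma_R$ from $0$ to $y_L$ and $y_R$, each of passage time at most $t$, then trap $H$ between them by planarity, so the lateral extent of $H$ is bounded by the separation of $\gamma_L$ and $\gamma_R$ at the radial level of $H$. Without curvature, Kesten-type concentration still delivers a transverse fluctuation bound of order $\sqrt{t \log t}$ for geodesics at scale $t$, which is exactly the lateral estimate required.

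Combining the two gives $\mathrm{diam}(H) = O(\sqrt{t\log t})$, hence $\#H \leq Ct\log t$. A union bound over the polynomially many candidate holes in $B(t)$ together with Borel--Cantelli promotes this to the almost sure statement for all large $t$. The main obstacle is the transverse fluctuation step: without the uniform curvature assumption one has no direct superdiffusive bound, so the $\sqrt{t\log t}$ separation of the sandwiching geodesics must be teased out by comparing typical passage times across lateral segments to the budget $t$ using only concentration for $T(0,\cdot)$, which is the source of the weaker bound here relative to the $(\log t)^C$ bound available under curvature. A secondary point is resolving the circular dependence between $D$ and the width of the confining annulus, which is handled by a standard bootstrap.
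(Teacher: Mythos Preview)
Your overall architecture---a sector around an extremal point $x_0$ of the hole, a radial bound via concentration $|T(0,x)-g(x)|=O(\sqrt{|x|\log|x|})$, and a lateral bound via planarity---matches the paper. But two of your steps do not go through as written.

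First, the lateral step has a real gap. You invoke a ``transverse fluctuation bound of order $\sqrt{t\log t}$'' for geodesics without curvature, to be ``teased out \ldots\ using only concentration for $T(0,\cdot)$.'' Concentration alone gives, for $z$ on a geodesic from $0$ to $y$, that $g(z)+g(y-z)-g(y)=O(\sqrt{t\log t})$; without curvature the unit ball of $g$ may have flat facets, and this quantity can vanish while the Euclidean distance from $z$ to the segment $[0,y]$ is of order $t$. So no transverse bound follows. The paper does \emph{not} bound transverse fluctuations. Instead it fixes the sector width $J_{x_0}\asymp\sqrt{(\log g(x_0))/g(x_0)}$ in advance, assumes for contradiction that the hole crosses a side of the sector, and picks a single geodesic $\sigma$ from a point $\bar y_0$ at angle $J_{x_0}/2$ to $0$. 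Planarity plus the crossing force $\sigma$ to touch a bounding ray at some $\bar p_1$ \emph{before} dropping to the inner arc $\{g=(1-K_{x_0})g(x_0)\}$. The segment of $\sigma$ from $\bar y_0$ to $\bar p_1$ then has $\ell^1$-length $\gtrsim J_{x_0}g(x_0)\asymp\sqrt{g(x_0)\log g(x_0)}$, and Kesten's lemma (any self-avoiding path of $n$ edges has passage time $\geq an$ with high probability) converts this into $T(\bar y_0,\bar p_1)\gtrsim\sqrt{g(x_0)\log g(x_0)}$. Subtracting from $T(0,\bar y_0)\leq g(x_0)+O(\sqrt{g(x_0)\log g(x_0)})$ makes $T(0,\bar p_1)$ too small to be consistent (via concentration) with $\bar p_1$ still lying outside the inner arc. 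Kesten's lemma is the missing ingredient in your sketch; without it the lateral step fails.

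Second, two circularity issues you flag are artifacts of your setup and are avoided entirely in the paper. For the radial bound you connect $x$ to $\partial H$ by a path of length $D$ inside $H$, producing an annulus of width $O(D\log t)$ and a bootstrap. The paper instead exploits that the \emph{extremal} vertex $x_0$ is already adjacent to $B(t)$, so $T(0,x_0)\le t+O(\log t)$ directly; and any putative exit point $p_0$ through the near side has $T(0,p_0)>t$ because $p_0\in\mathsf C$, while concentration at $p_0$ gives $T(0,p_0)\le g(x_0)-2C\sqrt{g(x_0)\log g(x_0)}$, a contradiction with no reference to $D$. Similarly, your choice of $y_L,y_R$ ``just outside the angular range of $H$'' presupposes what you want to prove; the paper fixes the sector dimensions a priori and argues by contradiction.
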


\subsection{Outline of the paper}

The rest of the paper consists of proofs of the main results. First, in Sec.~\ref{sec: lower_bounds}, we prove Thm.~\ref{thm: lower_bounds}.  The proof contains three steps. In step 1, we construct a high-weight event contained in an $\ell^1$-ball $\Lambda(n)$ that is used to create holes in $B(t)$. In step 2, we show how to find translates of $\Lambda(n)$ that are directly outside $B(t)$. In step 3, we put these tools together to prove that a.s., for all large $t$, many of the translates of $\Lambda(n)$ outside of $B(t)$ have high-weight configurations that turn into holes in $B(t)$ after a short time. In Sec.~\ref{sec: curvature_upper_bound}, we move to the proof of Thm.~\ref{sec: curvature_upper_bound}. The argument shows that a.s., for all large $t$, the largest hole in $B(t)$ must be contained in a sector of an annulus with volume of order $(\log t)^C$ (see Fig.~\ref{fig: fig_7}). Last, in Sec.~\ref{sec: no_curvature_upper_bound}, we show how to modify the proof from Sec.~\ref{sec: curvature_upper_bound} without the curvature assumption to prove Thm.~\ref{thm: no_curvature_upper_bound}.

\section{Proof of Thm.~\ref{thm: lower_bounds}}\label{sec: lower_bounds}

Throughout this section, we suppose that \eqref{eq: percolation_assumption} and \eqref{eq: non_trivial} hold. Therefore we can pick $a,b$ with $0 < a < b$ such that for every $\delta > 0$,
\begin{equation}\label{eq: a_b_delta}
\mathbb{P}(\tau_e \in [a-\delta,a]) > 0 \text{ and } \mathbb{P}(\tau_e \in [b,2b]) > 0
\end{equation}

\noindent
{\bf Step 1.} We first construct a high-weight event that ensures the existence of holes. For $n \geq 1$, let 
\[
\Lambda(n) = \{x \in \mathbb{Z}^d : \|x\|_1 \leq n\}
\]
and write $\mathbf{e}_i$ for the $i$-th coordinate vector. For $m_1,m_2,m_3 \geq 1$, define the region 
\[
R = R(m_1,m_2,m_3) = \left\{x \in \mathbb{Z}^d : -m_1 \leq x \cdot \mathbf{e}_1 \leq m_2, \sum_{i=2}^d |x \cdot \mathbf{e}_i| \leq m_3\right\},
\]
with interior boundary 
\[
\hat R = \hat R(m_1,m_2,m_3) = \{x \in R : \exists y \in \mathbb{Z}^d \setminus R \text{ with } \|x-y\|_1 = 1\}.
\]
Also define the discrete line segment 
\[
L = \{ k\mathbf{e}_1 : k = -n, \dots, -m_1\}.
\]
(See the left side of Fig.~\ref{fig: fig_1}.)

	\begin{figure}[h]
\hbox{\hspace{1.5cm}\includegraphics[width=0.9\textwidth, trim={0 -1cm 0cm -1cm}, clip]{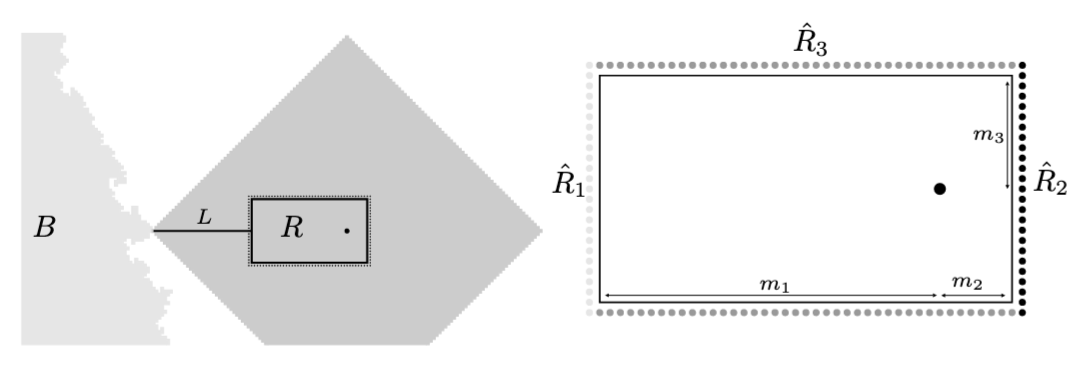}}
  \caption{On the left: the rectangle $R$ inscribed in the set $\Lambda(n)$, translated to touch the growing ball $B$ at a corner. On the right: the interior vertex boundary of $R$, defined as $\hat{R}$, is partitioned into $\hat{R}_1$, $\hat{R}_2$, and $\hat{R}_3$, indicated by the left (light grey), right (black), and top (dark grey) vertices. The origin is represented by the solid ball inside $R$.}
  \label{fig: fig_1}
\end{figure}

Given these geometric definitions, we now define our high-weight event. It is $E_n$, the event that
\begin{enumerate}
\item $\tau_e \in [a-\delta,a]$ for all $e = \{x,y\}$ with $x,y \in \hat R \cup L$, and
\item $\tau_e \in [b,2b]$ for all other $e = \{x,y\}$ with $x,y \in \Lambda(n)$.
\end{enumerate}
In step 3, we will use this event to create a hole in $B(t)$. The edges in item 1 allow one to enter $\Lambda(n)$ at $-n\mathbf{e}_1$, travel along $L$, and quickly encircle the high-weight region in $R$, where a hole can appear.

\begin{Lemma}\label{lem: barrel}
Let $a,b$ be as in \eqref{eq: a_b_delta} and $\epsilon < (b-a)/(2b+3a)$. If
\[
1 \leq m_2 \leq \epsilon m_3 \leq \epsilon^2 m_1 \leq \epsilon^3 n,
\]
then, for any $\delta>0$, on $E_n$,
\begin{equation}\label{eq: E_n_upper_bound}
T_{\Lambda(n)}(-n\mathbf{e}_1,y) \leq a(n+2m_3) + am_2 \text{ for all } y \in \hat R,
\end{equation}
and
\begin{equation}\label{eq: E_n_lower_bound}
T_{\Lambda(n)}(x,0) \geq (a-\delta)(n+2m_3)+bm_2 \text{ for all } x \in \mathbb{Z}^d \text{ with } \|x\|_1 = n,
\end{equation}
where $T_{\Lambda(n)}$ is the minimal passage time over paths whose vertices are in $\Lambda(n)$.
\end{Lemma}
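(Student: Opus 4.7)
The upper bound is achieved by an explicit path from $-n\mathbf{e}_1$ to $y$. First, walk along $L$ from $-n\mathbf{e}_1$ to $-m_1\mathbf{e}_1$ using $n-m_1$ edges whose endpoints both lie in $L$, each with weight at most $a$ on $E_n$. Second, walk in $\hat R$ from $-m_1\mathbf{e}_1$ (the center of the left cap) to $y$ in at most $m_1 + m_2 + 2m_3$ steps: at most $m_3$ radial steps across the left cap to a corner of the side, then at most $m_1 + m_2$ longitudinal steps along the side, then at most $m_3$ radial steps across the right cap to $y$. All edges of this second segment lie in $\hat R$, so they also have weight at most $a$. The combined path stays in $\Lambda(n)$ (since $L \cup \hat R \subset \Lambda(n)$ by the chain of inequalities) and its total weight is at most $a[(n-m_1)+(m_1+m_2+2m_3)] = a(n+2m_3)+am_2$.

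For the lower bound, fix any path $\gamma = (v_0, \dots, v_k)$ from $v_0 = x$ to $v_k = 0$ in $\Lambda(n)$. Write $U := \hat R \cup L$ and classify each edge as \emph{low} if both endpoints lie in $U$ and \emph{high} otherwise. On $E_n$, low edges have weight $\geq a-\delta$ and high edges have weight $\geq b$. Two geometric facts drive the argument: (i) every vertex of $U$ has $\ell^1$-norm $\geq m_2$ (the minimum is attained at $m_2\mathbf{e}_1$); and (ii) the induced subgraph distance $d_U(y,z)$ satisfies $d_U(y,z) \geq \|y-z\|_1 + 2m_3$ whenever $y$ and $z$ lie on opposite ends of the shell (for instance $y\in L$ or on the left cap, and $z$ on the right cap), because the shortest path in $U$ between them must detour through one of the sides of $\hat R$.

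If $\gamma$ avoids $U$ entirely, every edge is high and $\gamma$ has weight at least $bn$, which exceeds the target by $n \geq \epsilon^{-3} m_3$ and $\epsilon < (b-a)/(2b+3a)$. Otherwise let $z = v_\tau$ be the last $U$-visit on $\gamma$ and $y = v_\sigma$ the first. The final leg $v_\tau \to v_k$ lies in $\Lambda(n)\setminus U$ except at $v_\tau$, so its edges are all high and its length is at least $\|z\|_1 \geq m_2$ by (i), contributing weight $\geq b\|z\|_1$. The initial leg $v_0\to v_\sigma$ is similarly all high, contributing $\geq b\cdot d(x,U)$. The middle leg $v_\sigma\to v_\tau$, in the cheapest case of a single $U$-excursion, stays in $U$ and has length $\geq d_U(y,z)$, contributing $\geq (a-\delta)d_U(y,z)$. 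Splitting by where $z$ lies in $U$ (in $L$ or on the left cap or side of $\hat R$, where $\|z\|_1$ is already of order $m_1$ or $m_3$; or on the right cap, where $\|z\|_1\approx m_2$), one combines (i) and (ii) with the chain $m_2 \leq \epsilon m_3 \leq \epsilon^2 m_1 \leq \epsilon^3 n$ and $\epsilon < (b-a)/(2b+3a)$ to verify in each case that the three contributions sum to at least $(a-\delta)(n+2m_3)+bm_2$. Paths with several $U$-excursions contain strictly more high edges than any single-excursion path with the same $y,z$, so they are dominated by the single-excursion estimate.

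The main obstacle is the subcase where $z$ lies on the right cap with $\|z\|_1$ close to $m_2$: the bound $b\|z\|_1$ then only delivers the $bm_2$ term, and the extra $(a-\delta)\cdot 2m_3$ must come from the length of the middle leg via (ii). This is where the shell geometry of $\hat R$ and the constraint $\epsilon < (b-a)/(2b+3a)$ play their essential roles.
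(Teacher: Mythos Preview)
Your upper-bound argument is correct and essentially the paper's. The lower-bound sketch, however, has two genuine gaps.

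First, fact (ii) is false as stated. In $d=2$ take $y=-n\mathbf{e}_1\in L$ and $z=(m_2,m_3)$, a corner of the right cap. Then $\|y-z\|_1=n+m_2+m_3$, yet the $U$-path $-n\mathbf{e}_1\to-m_1\mathbf{e}_1\to(-m_1,m_3)\to(m_2,m_3)$ has exactly $n+m_2+m_3$ edges, so $d_U(y,z)=\|y-z\|_1$ with no $2m_3$ surplus. The correct identity (for $y$ on the axis and $z=(m_2,j)$) is $d_U(y,z)=\|y-z\|_1+2(m_3-|j|)$; the missing detour is compensated by the larger $\|z\|_1$ in your final high leg, but you never carry out this trade-off, and the subcase you flag as ``the main obstacle'' is precisely where (ii) fails.

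Second, your dismissal of multi-excursion paths is not valid. A path that leaves $U$ between $y$ and $z$ can have strictly \emph{fewer} total edges than $d_U(y,z)$: the straight segment from $-m_1\mathbf{e}_1$ through the interior of $R$ to $m_2\mathbf{e}_1$ uses $m_1+m_2$ edges, whereas $d_U(-m_1\mathbf{e}_1,m_2\mathbf{e}_1)=m_1+m_2+2m_3$. Having ``more high edges'' does not by itself imply higher weight when the path is shorter; you would need a quantitative trade-off between the $b$-cost and the edge savings, which you do not provide.

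The paper avoids both problems. It first proves $T_{\Lambda(n)}(x,0)\ge T_{\Lambda(n)}(-n\mathbf{e}_1,0)$ for every boundary $x$ by comparing at the first hit of $U$ (your initial-leg idea, but used only for this reduction). Then for the single starting point $-n\mathbf{e}_1$ it tracks the \emph{last} visits of an optimal path to $\hat R_1$, $\hat R_3$, $\hat R_2$ in turn and lower-bounds each intermediate segment by $(a-\delta)$ times its $\ell^1$-displacement, never invoking $d_U$. The $2m_3$ term then arises from the $\ell^1$-norm of the $\hat R_3$ waypoint, so no assumption that the path stays in $U$ is needed and the multi-excursion issue does not arise.
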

\begin{proof}
Throughout the proof we will use the sides of $\hat R$:
\[
\hat R_1 = \{w \in \hat R : w \cdot \mathbf{e}_1 = -m_1\}, ~\hat R_2 = \{w \in \hat R : w \cdot \mathbf{e}_1 = m_2\},
\]
and
\[
\hat R_3 = \{w \in \hat R : -m_1 < w \cdot \mathbf{e}_1 < m_2\}.
\]
(See the right side of Fig.~\ref{fig: fig_1}.)

To show \eqref{eq: E_n_upper_bound}, let $y \in \hat R$; we will construct a path $\gamma$ from $-n\mathbf{e}_1$ to $y$ and estimate its passage time. By symmetry, we may assume that $y \cdot \mathbf{e}_i \geq 0$ for $i=2, \dots, d$. If $y \in \hat R_1 \cup \hat R_3$ then there is a $\gamma$ from $-n\mathbf{e}_1$ to $y$ with $\|-n\mathbf{e}_1 - y\|_1$ many edges all of which have both endpoints in $\hat R \cup L$. To build $\gamma$, start at $-n\mathbf{e}_1$ and move to $-m_1\mathbf{e}_1$ along $L$. If $y \in \hat R_1$, move to $y$ by increasing each $i$-th coordinate for $i=2, \dots, d$ in sequence. If $y \in \hat R_3$, move to $-m_1 \mathbf{e}_1 + \sum_{i=2}^d (y \cdot \mathbf{e}_i) \mathbf{e}_i$ by increasing each $i$-th coordinate for $i=2, \dots, d$ in sequence, and then move to $y$ by increasing the first coordinate. The path $\gamma$ as constructed has the desired properties, and  
\[
T(\gamma) \leq a\|-n\mathbf{e}_1 - y\|_1 \leq a(n+m_3 + m_2) \leq \text{RHS of } \eqref{eq: E_n_upper_bound}.
\]

If, instead, $y \in \hat R_2$, then we again move from $-n\mathbf{e}_1$ to $-m_1 \mathbf{e}_1$ along $L$, and then to the vertex 
\[
q = -m_1 \mathbf{e}_1 + \left(m_3 - \sum_{i=2}^d (y \cdot \mathbf{e}_i)\right) \mathbf{e}_2 + \sum_{i=2}^d (y \cdot \mathbf{e}_i) \mathbf{e}_i
\]
by increasing each $i$-th coordinate for $i=2, \dots, d$ in sequence. Then we move to $q + (m_1+m_2)\mathbf{e}_1$ by increasing the first coordinate, and finally decrease the second coordinate to reach $y$. This $\gamma$ as constructed has  
\[
(n-m_1) +m_3 + (m_1 + m_2) + (m_3 - \sum_{i=2}^d |y \cdot \mathbf{e}_i|) \leq n+2m_3 + m_2
\]
 many edges with weight $\leq a$, so we obtain \eqref{eq: E_n_upper_bound}.

For \eqref{eq: E_n_lower_bound}, we first show that if $x \in \mathbb{Z}^d$ has $\|x\|_1=n$, then
\begin{equation}\label{eq: hole_step_1}
T_{\Lambda(n)} (x,0) \geq T_{\Lambda(n)}(-n\mathbf{e}_1,0).
\end{equation}
To do this, let $u$ be the first intersection of any $T_{\Lambda(n)}$-optimal path from $x$ to $0$ with the set $\hat R \cup L$. Write $\gamma_1$ for the segment from $x$ to $u$ and $\gamma_2$ for the remaining segment. Then
\begin{align}
T_{\Lambda(n)}(x,0) - T_{\Lambda(n)}(-n \mathbf{e}_1,0) &\geq (T(\gamma_1) + T(\gamma_2)) - (T_{\Lambda(n)}(-n\mathbf{e}_1,u) + T(\gamma_2)) \nonumber \\
&= T(\gamma_1) - T_{\Lambda(n)}(-n\mathbf{e}_1,u). \label{eq: first_rotini}
\end{align}
Because $\gamma_1$ uses only edges with weight $\geq b$, $T(\gamma_1) \geq b \|x-u\|_1 \geq b(n-\|u\|_1)$. If $u \in L$ equals $-k\mathbf{e}_1$, then this is $b(n-k)$, but $T_{\Lambda(n)}(-n\mathbf{e}_1,u) \leq a(n-k)$, so \eqref{eq: first_rotini} is nonnegative. If, on the other hand, $u \in \hat R$, then $n-\|u\|_1 \geq n-(m_1+m_3)$ so by \eqref{eq: E_n_upper_bound},
\begin{align*}
T(\gamma_1) - T_{\Lambda(n)}(-n\mathbf{e}_1,u) \geq b(n-m_1-m_3) - a(n+2m_3+m_2) &\geq b(n-2m_1) - a(n+3m_1) \\
&\geq (b-a - 2b \epsilon - 3a\epsilon)n,
\end{align*}
which is $> 0$. This shows \eqref{eq: hole_step_1}.

	\begin{figure}[h]
\hbox{\hspace{1.5cm}\includegraphics[width=0.8\textwidth, trim={0 -1cm -1cm 1cm}, clip]{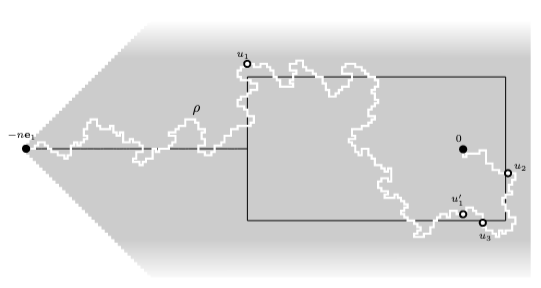}}
  \caption{Illustration of the last part of the proof of Lem.~\ref{lem: barrel}. The path depicted in white, $\rho$, is a $T_{\Lambda(n)}$-optimal path from $-n\mathbf{e}_1$ to 0.}
  \label{fig: fig_2}
\end{figure}

To prove \eqref{eq: E_n_lower_bound}, it now suffices by \eqref{eq: hole_step_1} to give the same lower bound for $T_{\Lambda(n)}(-n\mathbf{e}_1,0)$. Consider any $T_{\Lambda(n)}$-optimal path $\rho$ from $-n\mathbf{e}_1$ to $0$ and let $u_1$ be the last vertex of $\rho$ with $u_1 \cdot \mathbf{e}_1 = -m_1$. 
%Define subsets of $\hat R$ as follows:
%\[
%\hat R_2 = \{w \in \hat R : w \cdot \mathbf{e}_1 = m_2\}, \hat R_3 = \{w \in \hat R : -m_1 < w \cdot \mathbf{e}_1 < m_2\}.
%\]
First, if $\rho$ contains no point in $\hat R_3$ after $u_1$, then let $u_1'$ be its first point after $u_1$ with $u_1' \cdot \mathbf{e}_1 = 0$. Then all edges on $\rho$ between $u_1$ and $u_1'$ have weight $\geq b$, so we obtain
\begin{equation}\label{eq: high_case_1}
T_{\Lambda(n)}(-n\mathbf{e}_1,0) = T(\rho) \geq (a-\delta)\|-n\mathbf{e}_1 - u_1\|_1 + b\|u_1-u_1'\|_1 \geq (a-\delta)(n-m_1) + bm_1.
\end{equation}
Otherwise, $\rho$ contains a point in $\hat R_3$ after $u_1$. Let $u_3$ be the last such point. If $\rho$ does not contain a point of $\hat R_2$ after $u_3$, then all edges on $\rho$ after $u_3$ have weight $\geq b$, and we obtain
\begin{align}
T_{\Lambda(n)}(-n\mathbf{e}_1,0) = T(\rho) &\geq (a-\delta)\|-n\mathbf{e}_1 - u_3\|_1 + b\|u_3\|_1 \nonumber \\
&=  (a-\delta) (n+u_3 \cdot \mathbf{e}_1) + (a-\delta) \sum_{i=2}^d |u_3 \cdot \mathbf{e}_i| + b\sum_{i=1}^d |u_3 \cdot \mathbf{e}_i| \nonumber \\
%&= (a-\delta)(n + m_3) + (b |u_3 \cdot \mathbf{e}_1| + (a-\delta)(u_3 \cdot \mathbf{e}_1)) + bm_3 \\
&\geq (a-\delta)(n+m_3) + bm_3. \label{eq: high_case_2}
\end{align}
Here we have used that $\sum_{i=2}^d |u_3 \cdot \mathbf{e}_i| = m_3$.

The last possibility, shown in Fig.~\ref{fig: fig_2}, is that $\rho$ contains a point of $\hat R_2$ after $u_3$; let $u_2$ be the last such one. Again, all edges on $\rho$ after $u_2$ must have weight $\geq b$, so $T_{\Lambda(n)}(-n\mathbf{e}_1,0)$ is at least
\begin{align}
& (a-\delta) \|-n\mathbf{e}_1 - u_3\|_1 + (a-\delta) \|u_3-u_2\|_1 + b\|u_2\|_1 \nonumber \\
=~& (a-\delta) (n+u_3 \cdot \mathbf{e}_1 + m_3) + (a-\delta) \left( (u_2-u_3) \cdot \mathbf{e}_1 + \sum_{i=2}^d |(u_3-u_2) \cdot \mathbf{e}_i|\right) + b\sum_{i=1}^d |u_2 \cdot \mathbf{e}_i| \nonumber \\
\geq~& (a-\delta)(n+m_3) + (a-\delta) \sum_{i=2}^d (|(u_3-u_2)\cdot \mathbf{e}_i| + |u_2 \cdot \mathbf{e}_i|) + (b+a-\delta)(u_2 \cdot \mathbf{e}_1) \nonumber \\
\geq~&(a-\delta)(n+2m_3) + bm_2. \label{eq: high_case_3}
\end{align}

We claim that among \eqref{eq: high_case_1}-\eqref{eq: high_case_3},
\begin{equation}\label{eq: minimal}
\text{the term in } \eqref{eq: high_case_3} \text{ is minimal.}
\end{equation}
Combining this fact with \eqref{eq: hole_step_1} will complete the proof of \eqref{eq: E_n_lower_bound}. To see why \eqref{eq: minimal} holds, we write the difference between the terms in \eqref{eq: high_case_1} and \eqref{eq: high_case_3} as
\begin{align*}
(a-\delta)(n-m_1) + bm_1 - (a-\delta)(n+2m_3) - bm_2 &\geq b(m_1-m_2) - a(m_1+2m_3) \\
&\geq (b-a-b\epsilon^2-2a\epsilon)m_1,
\end{align*}
which is $>0$. Also, the difference between the terms in \eqref{eq: high_case_2} and \eqref{eq: high_case_3} is
\begin{align*}
%(a-\delta)(n-m_1) + bm_1 - (a-\delta)(n+m_3)-bm_3 &\geq b(m_1-m_3) - a(m_1+m_3) \\
%&\geq (b-a-\epsilon b - \epsilon a)m_1,
(a-\delta)(n+m_3) + bm_3 - (a-\delta)(n+2m_3)-bm_2 &= -(a-\delta)m_3 + b(m_3-m_2) \\
&\geq (b-\epsilon b - a)m_3,
\end{align*}
which is also $>0$. This completes the proof of \eqref{eq: minimal}.
\end{proof}

\bigskip
\noindent
{\bf Step 2.} Now that we have our high-weight event $E_n$ which takes place in $\Lambda(n)$, we describe a procedure to find translates of $\Lambda(n)$ that are directly outside the growing ball $B(t)$. These will house images of the event $E_n$, and will force holes in the ball at a time soon after $t$.

	\begin{figure}[t]
\hbox{\hspace{1cm}\includegraphics[width=0.9\textwidth, trim={0 -1cm 0cm -1cm}, clip]{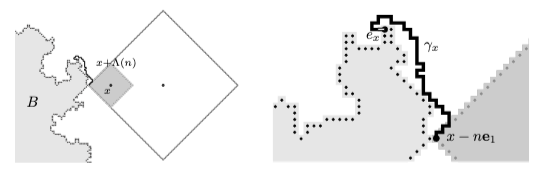}}
  \caption{On the left: a $(b',n)$-good vertex $x$, the center of the small diamond $x+\Lambda(n)$. The larger diamond and its center (corresponding to some $4ny$ for $y \in \partial^\infty B_n$, in notation introduced later) are not labeled, but are part of the statement in display \eqref{eq: placement_claim_1}. On the right: a close-up of the path $\gamma_x$ for the good vertex $x$. The initial vertex of $\gamma_x$ is $x-n\mathbf{e}_1$, and its terminal edge is $e_x$.}
  \label{fig: fig_3}
\end{figure}

Let $B$ be a finite connected set of vertices (like $B(t)$) and let $n \geq 1$ and $b' \geq 0$. We say that a vertex $x \in B^c$ is $(b',n)$-good for $B$ if
\begin{enumerate}
\item $x+\Lambda(n) \subset B^c$ but $x+\Lambda(n+1)$ intersects $B$, and
\item there exists a path $\gamma_x$ starting at a vertex of the form $x\pm n\mathbf{e}_j$ such that
\begin{enumerate}
\item $\gamma_x$ uses no vertices of either $x+\Lambda(n-1)$ or $B$,
\item some edge $e_x$ connects the final point of $\gamma_x$ to a vertex in $B$ and has $\tau_{e_x}\leq b'$, and
\item $\gamma_x$ has at most $\sqrt{n}$ many edges.
%\[
%\begin{cases}
%\min\{\|v-w\|_1 : v \in x+\Lambda(n), w \in B\} & \quad\text{if } \mathbb{P}(\tau_e \leq b) = 1 \\
%\min\{\|v-w\|_1 : v \in x + \Lambda(n), w \in B\} + n &\quad\text{otherwise},
%\end{cases}
%\]
\end{enumerate}
\end{enumerate}

	\begin{figure}[h]
\hbox{\hspace{5cm}\includegraphics[width=0.4\textwidth, trim={0cm 0cm 0cm 0cm}, clip]{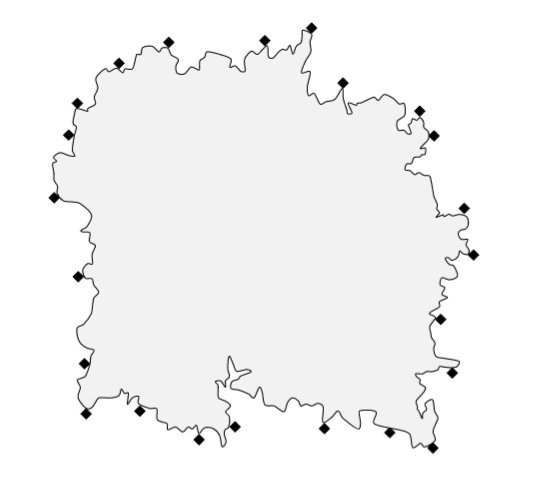}}
  \caption{Illustration of a $(b',n)$-good connected set $B$ of vertices. The translates $x+\Lambda(n)$, as $x$ ranges over the set $S(B)$ of vertices that are $(b',n)$-good for $B$, are shown in black.}
  \label{fig: fig_4}
\end{figure}

Fix a constant $c_1>0$. We say that $B$ is $(b',n)$-good if there is a set $S(B)$ of vertices $x$ that are $(b',n)$-good for $B$ such that
\begin{equation}\label{eq: B_b_n_good_def_new}
\text{any distinct }x,x' \in S(B) \text{ have } \|x-x'\|_1 \geq 4n
\end{equation}
and
\begin{equation}\label{eq: B_b_n_good_def}
\#S(B) \geq \frac{c_1}{n^{d-1}} \#B^{\frac{d-1}{d}}.
\end{equation}

Fig.~\ref{fig: fig_3} illustrates $(b',n)$-good vertices and Fig.~\ref{fig: fig_4} illustrates a $(b',n)$-good set $B$.

\begin{Prop}\label{prop: placement_prop}
There exist $b',c_1,C_2,c_3>0$ such that
\[
\mathbb{P}\left( \exists \text{ connected } B  \text{ with } 0 \in B, \#B = N \text{ and } B \text{ is not } (b',n)\text{-good}\right) \leq C_2 \left( \frac{N}{n}\right)^d \exp\left( - \frac{c_3}{n^{d-1}} N^{\frac{d-1}{d}}\right)
\]
for all large $n$ and for all $N \geq 1$.
\end{Prop}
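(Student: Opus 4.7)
The plan is to combine a deterministic packing argument with a probabilistic Chernoff estimate, handling the union bound not by enumerating connected sets $B$ (which would cost $e^{\Theta(N)}$) but by counting $4n$-separated local configurations in the random environment.

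\emph{Geometric step.} Fix any connected $B$ containing $0$ with $\#B = N$. Let $\mathcal{L}(B) := \{x \in \mathbb{Z}^d : x + \Lambda(n) \subset B^c \text{ and } x + \Lambda(n+1) \cap B \neq \emptyset\}$. The isoperimetric inequality on $\mathbb{Z}^d$ yields $|\mathcal{L}(B)| \geq c_0 N^{(d-1)/d}$, and a greedy packing in this $(d-1)$-dimensional layer produces a $4n$-separated subset $\{x_1,\dots,x_M\} \subset \mathcal{L}(B)$ with $M \geq c_1 N^{(d-1)/d}/n^{d-1}$. For each $x_i$, pigeonholing the coordinates of a touching vertex $y_i \in B$ with $\|y_i - x_i\|_1 = n+1$ across the $2d$ coordinate directions identifies a designated corner $x_i + \epsilon_i n\mathbf{e}_{j_i}$ from which a path of length at most $\sqrt{n}$, avoiding $x_i + \Lambda(n-1)$, can reach a vertex adjacent to $B$; this provides the deterministic skeleton for $\gamma_{x_i}$.

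\emph{Random step.} For each candidate $x$, define a local event $G_x$ depending only on edges within $\ell^1$-distance $n + \sqrt{n}$ of $x$: inside the $\sqrt{n}$-neighborhoods of the $2d$ corners $x \pm n\mathbf{e}_j$, at least one admissible ``final edge'' has weight $\leq b'$. Since distinct candidates are $4n$-separated, the events $\{G_{x_i}\}$ are mutually independent. Choosing $b'$ large so that $p := \mathbb{P}(\tau_e \leq b')$ is close to $1$ forces
\[
\mathbb{P}(G_x^c) \leq (1-p)^{c\,n^{d/2}} \leq e^{-\kappa n^{d/2}},
\]
with $\kappa > 0$ arbitrarily large in $b'$. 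When $G_x$ holds, the skeleton from the geometric step upgrades to a genuine $(b',n)$-good structure at $x$.

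\emph{Union bound (main obstacle).} The bad event implies the existence of a $4n$-separated witness set $W \subset \Lambda(N+n+1)$ of size at least $c_1 N^{(d-1)/d}/n^{d-1}$ with $G_x^c$ holding for every $x \in W$. Enumerating such $W$ rather than enumerating $B$ gives
\[
\mathbb{P}(\text{bad event}) \leq \binom{(CN/n)^d}{M} e^{-\kappa M n^{d/2}} \leq C_2 (N/n)^{d} \exp\!\left( M(C' \log(N/n) - \kappa n^{d/2}) \right).
\]
Taking $\kappa$ large enough to dominate the $\log(N/n)$ term closes the exponent in the relevant regime (where $n$ is at least a polylog of $N$) and yields the stated bound. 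The main obstacle is precisely this step: a naive single-edge version of $G_x$ would give failure probability only $\sim 1 - p$, far too large to beat the enumeration cost of $4n$-separated subsets. The resolution is to exploit the flexibility in the definition of $(b',n)$-goodness: allowing $\gamma_x$ to be any path of length $\leq \sqrt{n}$ gives access to $\Theta(n^{d/2})$ distinct candidate final edges per candidate, which is exactly what is required to drive the per-candidate failure probability below the enumeration cost.
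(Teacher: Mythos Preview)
There are two genuine gaps. First, your local event $G_x$ cannot simultaneously be $B$-independent and certify $(b',n)$-goodness. The definition requires the terminal edge $e_x$ of $\gamma_x$ to have its other endpoint in $B$; which edges qualify therefore depends on $B$. If $B$ meets $x+\Lambda(n+1)$ at only a handful of vertices (nothing forbids this), the number of candidate final edges is $O(1)$, not $\Theta(n^{d/2})$, and $\mathbb{P}(G_x^c)$ is of order $1-p$ rather than $e^{-\kappa n^{d/2}}$. The paper resolves this by a different $B$-independent event: $F_y$ is the existence, in a box of side $\sim 14n$, of a self-avoiding path of length $\lfloor\sqrt{n}\rfloor$ all of whose vertices have some incident edge heavier than $b'$. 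The point is that the exterior $*$-boundary $\partial^\infty B$ is connected and (once $\#B\geq(4n)^d$) large enough to supply such a path; every vertex on it is adjacent to $B$. Thus on $F_y^c$ that particular path must contain a vertex with \emph{all} incident edges $\leq b'$, in particular its edge into $B$, and this yields the desired $\gamma_x$. No ``$n^{d/2}$ candidate edges'' are needed --- the $\sqrt{n}$ vertices along $\partial^\infty B$ do the work.

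Second, your union bound over $4n$-separated witness sets $W\subset\Lambda(N+n+1)$ of size $M\asymp N^{(d-1)/d}/n^{d-1}$ costs $\binom{(CN/n)^d}{M}\leq\exp\bigl(C'M\log(N/n)\bigr)$, which is $e^{\Theta(M\log N)}$. You need this beaten by $e^{-\kappa n^{d/2}M}$, and you explicitly concede this only works ``where $n$ is at least a polylog of $N$''. But the proposition is stated for all large $n$ and all $N$, and in Step~3 it is applied with $n=C_{11}$ constant (and also with $n\asymp(\log t)^{1/d}\asymp(\log N)^{1/d^2}$); in both regimes $\log N$ overwhelms any fixed power of $n$. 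The paper avoids this by coarse-graining: $B_n=\{z:(4nz+[-2n,2n-1]^d)\cap B\neq\emptyset\}$ has $\partial^\infty B_n$ connected (Tim\'ar), so one enumerates \emph{connected} sets $S$ of size $k\geq c_4 N^{(d-1)/d}/n^{d-1}$ in a box of side $\sim N/n$. The lattice-animal count is $(2\ell+1)^d e^{C_6k}$ --- only exponential in $k$ with a constant rate, independent of $N$ --- and this is what allows a fixed $b'$ (making $\mathbb{P}(F_0)$ small) to win uniformly in $N$. Exploiting the connectedness of the coarse-grained boundary is the step your outline is missing.
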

\begin{proof}
Let $B$ be a connected set with $\#B= N$ and $0 \in B$. By taking $C_2$ large, we may assume that $N \geq (4n)^d$, so that
\begin{equation}\label{eq: B_conditions}
0 \in B \text{ and } B \text{ is not contained in }[-2n,2n-1]^d.
\end{equation} 
To verify that $B$ is $(b',n)$-good with high probability, we first consider vertices of the form $4ny$ which are directly outside of $B$. So, we cover $B$ with boxes to get
\[
B_n = \{z \in \mathbb{Z}^d : (4nz+[-2n,2n-1]^d) \cap B \neq \emptyset\},
\]
which is also a finite connected set. Using the notation 
\[
\partial^\infty \mathfrak{V} = \{y \in \mathfrak{V}^c : y \text{ is in the infinite component of }\mathfrak{V}^c, \exists z \in \mathfrak{V} \text{ with } \|y-z\|_\infty = 1\}
\]
for the exterior $\ast$-boundary of a finite connected $\mathfrak{V} \subset \mathbb{Z}^d$, we remark that $\partial^\infty B_n$ is connected \cite[Thm.~3]{T13}. For $v \in \mathbb{Z}^d$, define $\sigma_v = \max_w \tau_{\{v,w\}}$ to be the maximal weight over all edges with $v$ as an endpoint. For $y \in \mathbb{Z}^d$, let $F_y$ be the event that there exists a vertex self-avoiding path in $4ny+[-7n,7n-1]^d$ with $\lfloor \sqrt{n}\rfloor$ many edges and whose vertices $v$ satisfy $\sigma_v > b'$. In this first part of the proof, we show that 
\begin{equation}\label{eq: placement_claim_1}
\text{if }y \in \partial^\infty B_n\text{ and }F_y^c \text{ occurs, then some vertex in }4ny + [-6n,6n]^d \text{ is }(b',n)\text{-good for }B.
\end{equation}
(See Fig.~\ref{fig: fig_6}.)

To prove \eqref{eq: placement_claim_1}, suppose that $y \in \partial^\infty B_n$ and $F_y^c$ occurs. Because $y \notin B_n$, we have $4ny + [-2n,2n-1]^d \subset B^c$, but because there is a $z \in B_n$ with $\|z-y\|_\infty = 1$, we know $4ny + [-6n,6n-1]^d$ intersects $B$ at some point $y_0$. Choose $y_0$ so that $\|y_0-4ny\|_1 =  \min_{y' \in B}\|y'-4ny\|_1$. To select our point $x$ which will be $(b',n)$-good for $B$, we assume without loss in generality that $(y_0-4ny) \cdot \mathbf{e}_i \geq 0$ for all $i =1, \dots, d$, and that the first coordinate of $y_0-4ny$ is maximal. Because $\|y_0-4ny\|_\infty \geq 2n$, we find $(y_0-4ny) \cdot \mathbf{e}_1 \geq n+1$, and we define
%\begin{equation}\label{eq: x_def}
\[
x = y_0 - (n+1)\mathbf{e}_1.
\]
%\end{equation}
Then $\|x-y_0\|_1 = n+1$ and so $x + \Lambda(n+1)$ intersects $B$ at the point $y_0 = x+(n+1)\mathbf{e}_1$. However, if $w \in x + \Lambda(n)$, then $\|w-4ny\|_1 \leq \|w-x\|_1 + \|x-4ny\|_1 \leq n + \|x-4ny\|_1$ and
\[
\|x-4ny\|_1 = \|y_0 - 4ny - (n+1)\mathbf{e}_1\|_1 = \|y_0-4ny\|_1 - (n+1),
\]
so $\|w-4ny\|_1 \leq \|y_0-4ny\|_1 - 1$, giving by minimality of $y_0$ that $w \notin B$. Therefore $x+\Lambda(n) \subset B^c$. Furthermore, because $(y_0 - 4ny) \cdot \mathbf{e}_1 = \|y_0 - 4ny\|_\infty \geq n+1$, we have $\|x-4ny\|_\infty = \|y_0-4ny - (n+1)\mathbf{e}_1\|_\infty \leq \|y_0 - 4ny\|_\infty \leq 6n$, so we conclude that $x \in 4ny + [-6n,6n]^d$. This shows that
\[
x \in 4ny + [-6n,6n]^d, ~x + \Lambda(n) \subset B^c, \text{ and } x+\Lambda(n+1) \text{ intersects }B \text{ at a point } y_0,
\]
where $y_0 = x + (n+1)\mathbf{e}_1$. Even without our assumptions on $(y_0-4ny) \cdot \mathbf{e}_i$, we obtain the same statement, but $y_0$ is then of the form $x \pm (n+1)\mathbf{e}_j$. This shows item 1 of the definition of $(b',n)$-good for the vertex $x$.

	\begin{figure}[h]
\hbox{\hspace{5cm}\includegraphics[width=0.4\textwidth, trim={0cm 0cm 0cm 0cm}, clip]{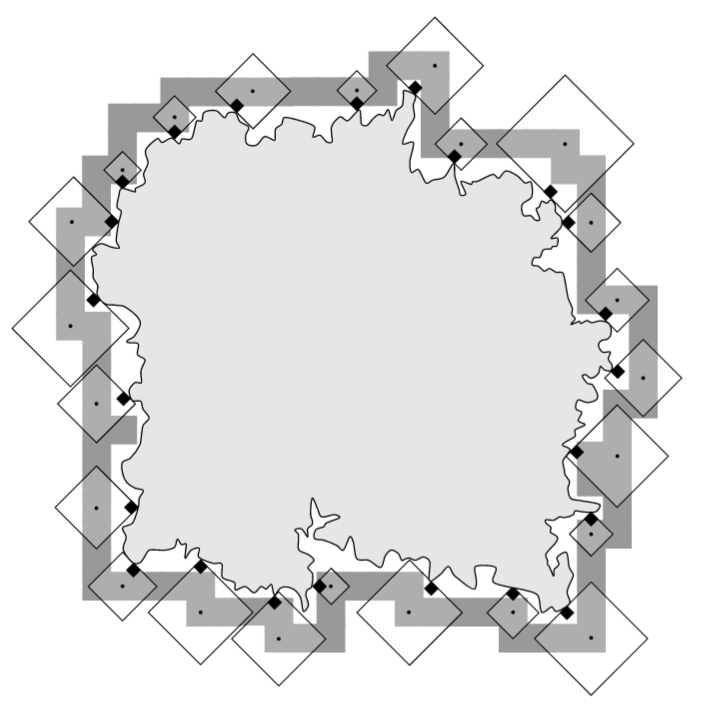}}
  \caption{Extracting a collection of $(b',n)$-good vertices (the centers of the black diamonds) surrounding a connected set $B$ of vertices, in light grey. The smaller diamonds are centered at vertices $4ny$ for $y$ in the set $\partial^\infty B_n$ (shown in grey) such that $F_y^c$ occurs.}
  \label{fig: fig_6}
\end{figure}

If the edge connecting $y_0$ to the unique vertex $w_0$ in $x+\Lambda(n)$ has weight $\leq b'$, then we can simply set $\gamma_x$ to be the path with no edges and a single vertex $w_0$. In general, though, we must find a nearby edge satisfying this weight constraint. We observe that $w_0$ is in the exterior $\ast$-boundary $\partial^\infty B$ of $B$. This is because it is adjacent to $y_0$, which is in $B$, but also can be connected to $4ny$ without touching $B$, and $y \in \partial^\infty B_n$. Display \eqref{eq: B_conditions} ensures that $B$ is not contained in $w_0+[-\sqrt{n},\sqrt{n} ]^d$. Neither is $\partial^\infty B$ if $n$ is large, and so we can select $w_0' \in \partial^\infty B$ which is not in $w_0+[-\sqrt{n},\sqrt{n} ]^d$. Because $\partial^\infty B$ is connected, there is a vertex self-avoiding path $\mathfrak{p}$ from $w_0$ to $w_0'$ in $\partial^\infty B$ and since $x+ \Lambda(n) \subset B^c$, the path $\mathfrak{p}$ cannot use any vertices of $x+ \Lambda(n-1)$. Let $\mathfrak{p}_x$ be the initial segment of $\mathfrak{p}$ consisting of the first $\lfloor \sqrt{n} \rfloor$ many edges and list the vertices of $\mathfrak{p}_x$ as $w_0, w_1, \dots, w_{\lfloor \sqrt{n}\rfloor}$. Each $w_i$ is an endpoint of an edge $f_i$ whose other endpoint is in $B$. If $\tau_{f_i} \leq b'$ for some $i$, we let $i_0$ be the first such $i$ and define $\gamma_x$ to be the initial segment of $\mathfrak{p}_x$ from $w_0$ to $w_{i_0}$. This $\gamma_x$ satisfies conditions (a)-(c) of the definition of $(b',n)$-good. If $i_0$ does not exist, then the entire path $\mathfrak{p}_x$ must have vertices with $\sigma_v > b'$, meaning that $F_y$ occurs. This shows \eqref{eq: placement_claim_1}.

Given \eqref{eq: placement_claim_1}, we can now return to the main proof. Let $B$ be connected with $\#B = N$ and such that \eqref{eq: B_conditions} holds. The set $B_n$ satisfies $(4n)^d \#B_n \geq \#B = N$, so the isoperimetric inequality implies 
\begin{equation}\label{eq: discrete_isoperimetric}
\#\partial^\infty B_n \geq \frac{c_4}{n^{d-1}}N^{\frac{d-1}{d}}.
\end{equation} 
Suppose that $B$ is not $(b',n)$-good. We claim that for some constant $C_5>0$, and $c_1$ from the definition of $(b',n)$-good,
\begin{equation}\label{eq: spacing_claim}
\sum_{y \in \partial^\infty B_n} \mathbf{1}_{F_y^c} \leq \frac{C_5c_1}{n^{d-1}} N^{\frac{d-1}{d}}.
\end{equation}
To see why, partition $\partial^\infty B_n$ into $C_{5} = C_{5}(d)$ many subsets $S_1, \dots, S_{C_{5}}$ such that if for a fixed $i$, we select distinct $y,y' \in S_i$, then $\|y-y'\|_\infty \geq 4$. If for some such $i$, $F_y^c$ and $F_{y'}^c$ both occur, then let $x,x'$ be the corresponding $(b',n)$-good points from \eqref{eq: placement_claim_1}. We have 
\[
\|x-x'\|_1 \geq \|x-x'\|_\infty \geq \|4ny-4ny'\|_\infty - \|4ny - x\|_\infty - \|4ny' - x'\|_\infty \geq 16n - 6n - 6n = 4n.
\]
The definition of $(b',n)$-good then implies $\sum_{y \in S_i}\mathbf{1}_{F_y^c} < (c_1/n^{d-1})N^{(d-1)/d}$ for $i=1, \dots, C_{5}$, and this gives \eqref{eq: spacing_claim}.

Since $B \subset [-N,N]^d$, we have $B_n \subset [-(N/n)-1,(N/n)+1]^d$ and so $\partial^\infty B_n \subset [-(N/n)-2, (N/n)+2]^d.$ Taking $c_4$ from \eqref{eq: discrete_isoperimetric}, if $A_{N,n}$ is the event that there exists a finite connected set $S \subset \mathbb{Z}^d$ such that
\[
\#S \geq \frac{c_4}{n^{d-1}} N^{\frac{d-1}{d}} \text{ and } S \subset \left[ - \frac{N}{n}-2, \frac{N}{n} + 2 \right]^d,
\]
but $\sum_{y \in S} \mathbf{1}_{F_y^c} \leq (c_1C_{5}/c_4)\#S$, then
\begin{equation}\label{eq: near_final_bound}
\mathbb{P}(\exists \text{ connected } B  \text{ with } 0 \in B, \#B = N \text{ and } B \text{ is not } (b',n)\text{-good}) \leq \mathbb{P}(A_{N,n}).
\end{equation}
Let $\mathcal{S}_k$ be the collection of connected $S \subset \mathbb{Z}^d$ such that $\#S = k$ and $S$ contains the origin. Using the bound $\#\mathcal{S}_k \leq (2de)^k \leq e^{C_6k}$ \cite{BBR10} for some $C_6>0$, we obtain for $\ell \geq 0$
\begin{align*}
& \#\text{ connected }S \subset \mathbb{Z}^d \cap [-\ell, \ell]^d \text{ with } \#S = k \\
\leq~&\sum_{v \in [-\ell,\ell]^d}  \# \text{ connected }S \subset \mathbb{Z}^d \text{ containing }v \text{ with } \#S= k \\
\leq~& (2\ell+1)^d e^{C_6k}.
\end{align*}
Applying this with $\ell = N/n + 2$, we see that
\begin{equation}\label{eq: almost_pasta}
\mathbb{P}(A_{N,n}) \leq \left( 2\frac{N}{n} + 5 \right)^d \sum_{k \geq \frac{c_4}{n^{d-1}}N^{\frac{d-1}{d}}} e^{C_6k} \max_{S \in \mathcal{S}_k} \mathbb{P}\left( \sum_{y \in S} \mathbf{1}_{F_y^c} \leq \frac{c_1C_{5}}{c_4}k\right).
\end{equation}
For a given $S \in \mathcal{S}_k$, the events $\mathbf{1}_{F_y^c}$ are not independent as $y$ ranges over $S$, but they are only finitely dependent. Therefore we can extract a subset of size at least $c_7 k$ such that as $y$ ranges over the subset, the events $\mathbf{1}_{F_y^c}$ are independent. This implies that
\[
\max_{S \in \mathcal{S}_k} \mathbb{P}\left( \sum_{y \in S} \mathbf{1}_{F_y^c} \leq \frac{c_1C_{5}}{c_4}k\right) \leq \mathbb{P}\left( \sum_{i=1}^{\lfloor c_7 k\rfloor} Z_i \leq \frac{c_1C_{5}}{c_4}k\right),
\]
where $Z_i$ are i.i.d.~ and have the same distribution as $\mathbf{1}_{F_0^c}$. The right side is bounded by
\[
\mathbb{P}\left( \sum_{i=1}^{\lfloor c_7 k\rfloor} (1-Z_i) \geq \lfloor c_7 k \rfloor - \frac{c_1C_{5}}{c_4}k\right) \leq 2^{c_7 k} \mathbb{P}(F_0)^{\lfloor c_7 k \rfloor - \frac{c_1C_{5}}{c_4}k},
\]
so we can return to \eqref{eq: almost_pasta} to state, for some $C_8>0$,
\begin{equation}\label{eq: near_final_bound_2}
\mathbb{P}(A_{N,n}) \leq \left( 2\frac{N}{n} + 5\right)^d \sum_{k \geq \frac{c_4}{n^{d-1}}N^{\frac{d-1}{d}}} e^{C_8 k} \mathbb{P}(F_0)^{\lfloor c_7 k \rfloor - \frac{c_1C_{5}}{c_4}k}.
\end{equation}

Last, we must estimate $\mathbb{P}(F_0)$. For a given vertex self-avoiding path $\gamma$ in $[-7n,7n-1]^d$ with $\lfloor \sqrt{n} \rfloor$ many edges,  the events $\{\sigma_v > b\}$ as $v$ ranges over the vertices of $\gamma$ are not independent, but they are finitely dependent. Again, we can find a subset of the vertices of size at least $c_9 \sqrt{n}$ such that as $v$ ranges over the subset, the events are independent. This gives
\[
\mathbb{P}(\text{for all } v \in \gamma, \sigma_v > b') \leq \mathbb{P}(\sigma_0>b)^{c_9\sqrt{n}-1} \leq (2d \mathbb{P}(\tau_e > b'))^{c_9\sqrt{n}-1}.
\]
The number of such paths $\gamma$ is at most $(14n)^d (2d)^{\sqrt{n}}$, so
\[
\mathbb{P}(F_0) \leq (14n)^d (2d)^{\sqrt{n}} (2d \mathbb{P}(\tau_e > b'))^{c_9\sqrt{n}-1}.
\]
Putting this in \eqref{eq: near_final_bound_2}, we find
\[
\mathbb{P}(A_{N,n}) \leq \left( 2\frac{N}{n} + 5\right)^d \sum_{k \geq \frac{c_4}{n^{d-1}}N^{\frac{d-1}{d}}} e^{C_8k} ((14n)^d (2d)^{\sqrt{n}} (2d \mathbb{P}(\tau_e > b'))^{c_9\sqrt{n}-1})^{\lfloor c_7 k\rfloor - \frac{c_1C_{5}}{c_4}k}.
\]
First choose $c_1$ so small that $\lfloor c_7 k\rfloor - c_1C_{5}k/c_4$ is at least $c_7k/2$. After this, we may choose $b'$ so large that the entire summand is at most $2^{-k}$. This produces the bound
\[
\mathbb{P}(A_{N,n}) \leq 2 \cdot \left( 2\frac{N}{n} + 5\right)^d 2^{- \frac{c_4}{n^{d-1}}N^{\frac{d-1}{d}}}.
\]
Combined with \eqref{eq: near_final_bound}, this implies the statement of Prop.~\ref{prop: placement_prop}.
\end{proof}

\bigskip
\noindent
{\bf Step 3.} In this step we use the tools from the previous steps to construct holes in $B(t)$. First, by \cite[Eq.~(3)]{CT16}, our assumption \eqref{eq: percolation_assumption} gives a $c_{10}>0$ such that
\begin{equation}\label{eq: volume_lower_bound}
\mathbb{P}(c_{10}t^d \leq \#B(t) < \infty \text{ for all large } t) = 1.
\end{equation}
To place the translates of $\Lambda(n)$ from step 2 around the set $B(t)$, we choose a size of one of the two forms
\begin{equation}\label{eq: f_t_choice}
n = n_t = C_{11} \in \mathbb{N} \text{ or }  \lfloor c_{12} (\log t)^{\frac{1}{d}} \rfloor.
\end{equation}
We fix the rest of our parameters as follows:
\begin{enumerate}
\item $a,b$ are as in \eqref{eq: a_b_delta} and $b'$ is from Prop.~\ref{prop: placement_prop},
\item let $\delta = \epsilon^4$, where $\epsilon < (b-a)/(2b+3a)$ (compare to Lem.~\ref{lem: barrel}) will be taken small in the proof of \eqref{eq: near_end} below,
\item $m_1 = \floor{\epsilon n}, m_3 = \floor{\epsilon m_1}, m_2 = \floor{\epsilon m_3}$ and set $\mathsf{L} = \floor{\epsilon m_2}$.
\end{enumerate}
If $C_{11}$ and $t$ are large with $c_{12}$ fixed, the $m_i$'s satisfy the constraints in Lem.~\ref{lem: barrel}. The parameter $\mathsf{L}$ will be a lower bound on the radius of a hole. Now we apply Prop.~\ref{prop: placement_prop} for
\begin{align}
&\mathbb{P}(\exists \text{ connected }B \text{ with } 0 \in B, c_{10}t^d \leq \#B <\infty \text{ and } B \text{ is not } (b',n)\text{-good}) \nonumber \\
\leq~&\sum_{N \geq c_{10}t^d} \mathbb{P}(\exists \text{ connected }B \text{ with } 0 \in B, \#B = N \text{ and } B \text{ is not } (b',n)\text{-good}) \nonumber \\
\leq~&C_2 \sum_{N \geq c_{10} t^d} \left( \frac{N}{n}\right)^d \exp\left( - \frac{c_3}{n^{d-1}} N^{\frac{d-1}{d}}\right).\label{eq: taco_suprema}
\end{align}
The application of Prop.~\ref{prop: placement_prop} requires that $n$ is large, and this holds for large $C_{11}$ and $t$ for fixed $c_{12}$. For either choice of $n$ from \eqref{eq: f_t_choice}, the expression in \eqref{eq: taco_suprema} is summable in $t$, so for any large $C_{11}$ and any fixed $c_{12}$,
\begin{equation}\label{eq: to_use_later}
\sum_{t \in \mathbb{N}} \mathbb{P}\left(c_{10}t^d \leq \#B(t) < \infty \text{ but } B(t) \text{ is not } (b',n_t)\text{-good}\right) < \infty.
\end{equation}

From the definition of $(b',n)$-good, we get boxes of the form $x+\Lambda(n)$ situated around our set $B(t)$, so now we must populate them with versions of the event $E_n$ from step 1. To do this properly, we need to decouple the variables inside $B(t)$ from those outside. For a given finite, connected $B$ containing the origin that is $(b',n)$ good, we may choose at least $(c_1/n^{d-1})\#B^{(d-1)/d}$ many vertices $x$ that are $(b',n)$-good for $B$ and distinct $x, x'$ satisfy inequality \eqref{eq: B_b_n_good_def_new}. These vertices come with edges $e_{x}$ and paths $\gamma_{x}$ as in the definition. The edges and paths are contained in the boxes $[-n-\sqrt{n}-1,n+\sqrt{n}+1]^d + x$ because of item 2(c) in the definition, and by \eqref{eq: B_b_n_good_def_new}, these boxes are disjoint for distinct $x,x'$. Enumerate the first
\[
r = \left\lceil \frac{c_1}{n^{d-1}} \#B^{\frac{d-1}{d}} \right\rceil
\]
many of these points in some deterministic way as $x_1, \dots, x_r$. All of the $x_i$, $\gamma_{x_i}$, and $e_{x_i}$ are random, so we must fix their values for a large $t \in \mathbb{N}$ as
\begin{align}
&\mathbb{P}\left( c_{10} t^d \leq \#B(t) < \infty \text{ and } B(t) \text{ is } (b',n)\text{-good}\right) \nonumber \\
=~& \sum_{B : c_{10}t^d \leq \#B < \infty} \sum_{(z_i,\pi_i,e_i)_{i=1}^r} \mathbb{P}\left(B(t) = B \text{ is } (b',n)\text{-good}, x_i = z_i, \gamma_{x_i} = \pi_i, e_{x_i} = e_i ~\forall i\right). \label{eq: time_to_decouple}
\end{align}
We observe that the event in the probability depends only on edges with at least one endpoint in $B$, so it is independent of the weights of edges with both endpoints outside of $B$.

For a given choice of $(z_i, \pi_i, e_i)_{i=1}^r$, and $B$, we define events $(A_i)_{i=1}^r$ by the following conditions. $A_i$ is the event that:
\begin{enumerate}
\item all edges $e$ of $\pi_i$ have $\tau_e \leq a$, and
\item the event $T_iE_{n}$ occurs.
%\item all edges $e$ with both endpoints in $[-(6d+1)n,(6d+1)n]^d + 4nx_i$ but neither endpoint in $B$ or $\Lambda(n) + 4nx_i$ have $\tau_e \geq a-\delta$.
\end{enumerate}
In item 2, $T_iE_n$ is a certain translation and rotation of the high-weight event $E_n$ from step 1. Precisely, the initial point of $\pi_i$ is one of the points of the form $z_i \pm n\mathbf{e}_j$, and we define $T_i$ to be an isometry of $\mathbb{R}^d$ that maps $\Lambda(n)$ to $z_i + \Lambda(n)$ and $-n\mathbf{e}_1$ to the initial point of $\pi_i$. Then $T_iE_n$ is the event that the image configuration $(\tau_{T_i^{-1}(e)})$ is in $E_n$. Not only does the definition of $E_n$ depend on $n$ from \eqref{eq: f_t_choice} and $a,b$, it also depends on $\delta = \epsilon^4$ from \eqref{eq: a_b_delta} and the numbers $m_1,m_2,m_3$. Regardless of the values of the $m_i$, since they are $\leq n$, there exists $C_{13}>0$ depending only on $a,b,\epsilon$ such that $\mathbb{P}(E_n) \geq e^{-C_{13} n^d}$. Using this in the definition of $A_i$, there exists $C_{14}>0$ also depending only on $a,b,\epsilon$ such that
\[
\mathbb{P}(A_i) \geq \exp\left( - C_{14}n^d\right) \text{ for all } i = 1, \dots, r.
\]
Because the $A_i$'s are independent, we may bound the family $(\mathbf{1}_{A_i})_{i=1}^r$ stochastically from below by a family $(W_i)_{i=1}^r$ of i.i.d.~Bernoulli variables with parameter $p = e^{-C_{14}n^d}$. By Hoeffding's bound for Bernoulli random variables, $\mathbb{P}\left(W_1 + \dots + W_r \leq r\frac{p}{2}\right) \leq \exp\left( - \frac{r}{2}p^2\right)$, and we obtain
\[
\mathbb{P}\left( \sum_{i=1}^r \mathbf{1}_{A_i} \leq \frac{r}{2} \exp\left( - C_{14}n^d\right)\right) \leq \exp\left( - \frac{r}{2} \exp\left( - 2C_{14} n^d\right)\right).
\]
For any large $C_{11}$ and small $c_{12}$, we have $r \geq c_{15}(t/n)^{d-1}$ for all large $t$, so 
\[
\frac{r}{2} \exp\left(-2C_{14}n^d\right) \geq t^{d-1}\exp\left(-C_{16}n^d\right) \geq c_{17}t^{d-\frac{3}{2}} \text{ for all large }t.
\]
This implies for any large $C_{11}$ and small $c_{12}$,
\[
\mathbb{P}\left( \sum_{i=1}^r \mathbf{1}_{A_i} \leq \exp\left( - C_{16}n^d\right)t^{d-1} \right) \leq \exp\left( - c_{17} t^{d-\frac{3}{2}} \right) \text{ for all large } t.
\]
Returning to the right side of \eqref{eq: time_to_decouple}, independence gives for any large $C_{11}$ and small $c_{12}$, 
\begin{align}
&\left( 1 - \exp\left( - c_{17} t^{d-\frac{3}{2}} \right)\right) \mathbb{P}\left( c_{10} t^d \leq \#B(t) < \infty \text{ and }B(t) \text{ is } (b',n)\text{-good}\right) \nonumber \\
\leq~& \sum_{B : c_{10}t^d \leq \#B < \infty} \sum_{(z_i,\pi_i,e_i)_{i=1}^r} \mathbb{P}\left(
\begin{array}{c}
B(t) = B \text{ is } (b',n)\text{-good}, x_i = z_i, \gamma_{x_i} = \pi_i, e_{x_i} = e_i ~\forall i, \\
\sum_{i=1}^r \mathbf{1}_{A_i} \geq \exp\left( - C_{16}n^d\right) t^{d-1}
\end{array}
\right) \label{eq: really_time_to_decouple}
\end{align}
for all large $t$.

We will now argue that there exists $\epsilon>0$ such that on the event on the right of \eqref{eq: really_time_to_decouple}, if $C_{11}$ is any large number and $c_{12}$ is any fixed number, then for all large $t$, and all $i$ such that $A_i$ occurs,
\begin{equation}\label{eq: near_end}
x_i + \Lambda(\mathsf{L}) \text{ is in a bounded component of }B(s)^c \text{ for all } s \in [t+\kappa, t+\kappa +\epsilon^4 n], 
\end{equation}
where
\begin{equation}\label{eq: s_constraint}
\kappa = \kappa_t = \epsilon^4 n + a(n+2\epsilon^2 n) + a \epsilon^3 n,
\end{equation}
and these components are distinct for distinct values of $i$. In this statement, as before, $n = n_t$, so that $\mathsf{L}$ (defined below \eqref{eq: f_t_choice}) and $\kappa$ are also functions of $t$ (not $s$). To prove this, pick an outcome in this event with $i$ such that $A_i$ occurs, and let $u_i$ be the endpoint of $e_i$ in $B$. Let $v_i$ be the endpoint of $\pi_i$ that is in $x_i + \Lambda(n)$. Let $y \in T_i \hat R$ (this is the corresponding image of the set $\hat R$ from Lem.~\ref{lem: barrel} inside $x+\Lambda(n)$). Because $u_i \in B(t)$ and $\pi_i$ has at most $\sqrt{n}$ many edges, we have
\begin{align}
T(0,y) \leq T(0,u_i) + \tau_{e_i} + T(\pi_i) + T(v_i,y) &\leq t + b' + a\sqrt{n}+ a(n+2m_3)+am_2 \nonumber \\
&\leq t + \kappa. \label{eq: y_upper_bound}
\end{align}
We have used \eqref{eq: E_n_upper_bound} to estimate $T(v_i,y)$ and used $b' + a \sqrt{n} \leq \epsilon^4 n$, which is valid for any $\epsilon$ and $c_{12}$ so long as $C_{11}$ and $t$ are large. On the other hand, if $z \in \mathbb{Z}^d$ has $\|z-x_i\|_1 \leq \mathsf{L} \leq \min\{m_i\}$, condition 2 of the definition of $E_n$ implies
\[
T(0,z) \geq T(0,x_i) - T(x_i,z) \geq T(0,x_i) - 2\mathsf{L}b.
\]
Let $\sigma$ be any path from $0$ to $x_i$, let $\sigma_1$ be the initial segment until its first vertex outside $B$, and let $\sigma_2$ be its terminal segment starting at the point at which it enters $x_i + \Lambda(n)$ for the last time. Then because $\sigma_1$ connects 0 to $B(t)^c$,
\begin{align*}
T(\sigma) \geq T(\sigma_1) + T(\sigma_2) &\geq t + \min_{x : \|x-x_i\|_1 = n} T_{T_i\Lambda(n)}(x,x_i) \\
&\geq t + (a-\delta)(n+2m_3) + bm_2.
\end{align*}
The last inequality follows from \eqref{eq: E_n_lower_bound}. Take the infimum over $\sigma$ to obtain
\begin{align*}
T(0,z) \geq T(0,x_i) - 2\mathsf{L}b &\geq t + (a-\delta)(n+2m_3) + bm_2 - 2\mathsf{L}b \\
&\geq t + (a-\epsilon^4)(n+ 2 \epsilon^2 n) + b \epsilon^3 n - (2b+1)\epsilon^4 n \\
&= t + \kappa + (b-a)\epsilon^3 n - \epsilon^4(n+2\epsilon^2 n) - (2b+2)\epsilon^4 n.
\end{align*}
Again we have assumed that $\epsilon$ is fixed, $c_{12}$ is fixed, and $C_{11}$ and $t$ are large to remove the floor function in the definition of the $m_i$'s. From the above, we can choose $\epsilon$ so small such that for any $c_{12}$, and for any large $C_{11}$,
\[
T(0,z) \geq t + \kappa + \epsilon^4 n \text{ for all large } t.
\]
This inequality and \eqref{eq: y_upper_bound} show that for any $s$ in the interval described in \eqref{eq: near_end}, the set $x_i + \Lambda(\mathsf{L})$ is in $B(s)^c$, but $T_i \hat R$ is in $B(s)$. This implies \eqref{eq: near_end}. Furthermore, the sets $x_i + \Lambda(n)$ are disjoint, so since the components described in \eqref{eq: near_end} are contained in these sets, they are distinct for distinct values of $i$.

Given \eqref{eq: near_end}, we can finish the proof. Any component listed in \eqref{eq: near_end} contains $x_i + [0,\mathsf{L}/d]^d$, so it has at least $(\mathsf{L}/d)^d$ many vertices. If we define
\[
Y_t = \min_{s \in [t+\kappa_t, t+\kappa_t + \epsilon^4 n_t]} \# \text{ bounded components of }B(s)^c \text{ with at least } \left( \frac{\mathsf{L}_t}{d} \right)^d \text{ many vertices},
\]
then we can continue from \eqref{eq: really_time_to_decouple} with our $\epsilon$ from \eqref{eq: near_end}, any large $C_{11}$ and any small $c_{12}$ to obtain
\begin{align}
&\left( 1 - \exp\left( - c_{17} t^{d-\frac{3}{2}} \right)\right) \mathbb{P}\left( c_{10} t^d \leq \#B(t) < \infty \text{ and }B(t) \text{ is } (b',n)\text{-good}\right) \nonumber \\
\leq~& \sum_{B : c_{10}t^d \leq \#B < \infty} \sum_{(z_i,\pi_i,e_i)_{i=1}^r} \mathbb{P}\left(
\begin{array}{c}
B(t) = B \text{ is } (b',n)\text{-good}, x_i = z_i, \gamma_{x_i} = \pi_i, e_{x_i} = e_i ~\forall i, \\
Y_t \geq \exp\left( - C_{16}n^d\right) t^{d-1}
\end{array}
\right) \nonumber \\
=~& \mathbb{P}\left(B(t) \text{ is } (b',n)\text{-good, } c_{10}t^d \leq \#B(t) <\infty,~ Y_t \geq \exp\left( - C_{16} n^d\right)t^{d-1}\right) \label{eq: pizza_pie}
\end{align}
for all large $t$. This implies for any large $C_{11}$ and any small $c_{12}$
\begin{align*}
&\sum_{t \in \mathbb{N}} \mathbb{P}\left(c_{10} t^d \leq \#B(t) < \infty \text{ and } Y_t < \exp\left( - C_{16}n_t^d\right)t^{d-1}\right) \\
\leq~& \sum_{t \in \mathbb{N}} \mathbb{P}\left(c_{10}t^d \leq \#B(t) < \infty \text{ but } B(t) \text{ is not } (b',n_t)\text{-good}\right) \\
+~& \sum_{t \in \mathbb{N}} \mathbb{P}\left(Y_t < \exp\left( - C_{16}n_t^d\right)t^{d-1} \mid B(t) \text{ is }(b',n_t)\text{-good}, c_{10} t^d \leq \#B(t) <\infty\right).
\end{align*}
The sum in the second line is finite by \eqref{eq: to_use_later}. By \eqref{eq: pizza_pie}, the summands of the third are bounded for large $t$ by the summands of $\sum_{t \in \mathbb{N}} \exp\left( - c_{17} t^{d-\frac{3}{2}} \right) < \infty.$ The Borel-Cantelli lemma combined with \eqref{eq: volume_lower_bound} therefore implies that for any large $C_{11}$ and any small $c_{12}$, a.s.,
\begin{equation}\label{eq: N_t_conclusion}
Y_t \geq \exp\left( - C_{16} n_t^d\right) t^{d-1} \text{ for all large }t \in \mathbb{N}.
\end{equation}

Last, we use \eqref{eq: N_t_conclusion} to prove Thm.~\ref{thm: lower_bounds}. First take $n_t = C_{11}$. Then $\kappa_t = (\epsilon^4 + a + 2\epsilon^2 + a \epsilon^3)C_{11}$, and so the interval $I_t = [t+\kappa_t, t + \kappa_t + \epsilon^4 n_t]$ satisfies $I_t \cap I_{t+1} \neq \emptyset$ for all $t \geq 1$ so long as $C_{11}$ is large. Therefore \eqref{eq: N_t_conclusion} gives that a.s.~$B(t)^c$ has at least $\exp\left( - C_{16} C_{11}^d\right)t^{d-1}$ many bounded components for all large $t$. This proves item 2 of Thm.~\ref{thm: lower_bounds}. If we take $n_t = \floor{c_{12} (\log t)^{1/d}}$, then the intervals $I_t$ and $I_{t+1}$ also intersect for large $t$, if $c_{12}$ is fixed. For small $c_{12}$, we have $Y_t \geq t^{d-3/2}$ for all large $t$ so, in particular, $Y_t>0$. This gives that a.s., for all large $s$, the maximum hole size $M(s)$ is at least equal to $(\mathsf{L}_t/d)^d$, where $t$ is any number such that $s \in I_t$. If $t$ is large and $c_{12}$ is fixed, then this $t$ satisfies $t \geq s/2$, so we obtain
\[
\text{a.s. }, M(s) \geq \left( \frac{\mathsf{L}_{\frac{s}{2}}}{d}\right)^d \text{ for all large } s.
\]
This implies item 1 of Thm.~\ref{thm: lower_bounds} and completes the proof.

\section{Proof of Thm.~\ref{thm: curvature_upper_bound}}\label{sec: curvature_upper_bound}

In this section, we assume \eqref{eq: percolation_assumption}, \eqref{eq: exponential_moments}, and the uniform curvature condition. We first describe the idea of the proof. Let $t$ be large and let $x_0$, if it exists, be any vertex in the largest bounded component $\mathsf{C}$ of $B(t)^c$ with maximal Euclidean norm $\|x_0\|_2$. Let $\theta(v,w)$ be the angle (in $(-\pi,\pi]$) between $v,w \in \mathbb{R}^2$ and define the sector portion
\begin{equation}\label{eq: S_x_0_def}
S_{x_0} = \left\{v \in \mathbb{R}^2 : |\theta(v,x_0)| \leq J_{x_0}, ~1-K_{x_0} \leq \frac{\|v\|_2}{\|x_0\|_2} \leq 1 \right\},
\end{equation}
where 
\begin{equation}\label{eq: K_x_0_def}
J_{x_0} = \frac{(\log \|x_0\|_2)^{C_{18}-3}}{\|x_0\|_2},~K_{x_0} = \frac{(\log \|x_0\|_2)^{C_{18}} }{\|x_0\|_2},
\end{equation} 
and $C_{18} > 3$ is a large constant to be chosen later; see Fig.~\ref{fig: fig_7}. The component $\mathsf{C}$ containing $x_0$ is connected, and by extremality of $x_0$, it cannot cross the far side of $S_{x_0}$. Once we show that it cannot cross the left, right, and near sides, then we can deduce that $\mathsf{C} \subset S_{x_0}$. Because 
\begin{equation}\label{eq: S_x_0_volume}
S_{x_0} \text{ contains at most }C_{19} (\log \|x_0\|_2)^{2C_{18}} \text{ many vertices},
\end{equation}
and $\|x_0\|_2$ must be of order $t$ to be in a bounded component of $B(t)^c$, we conclude the result.

	\begin{figure}[h]
\hbox{\hspace{2.5cm}\includegraphics[width=0.6\textwidth, trim={0cm 0cm 0cm 0cm}, clip]{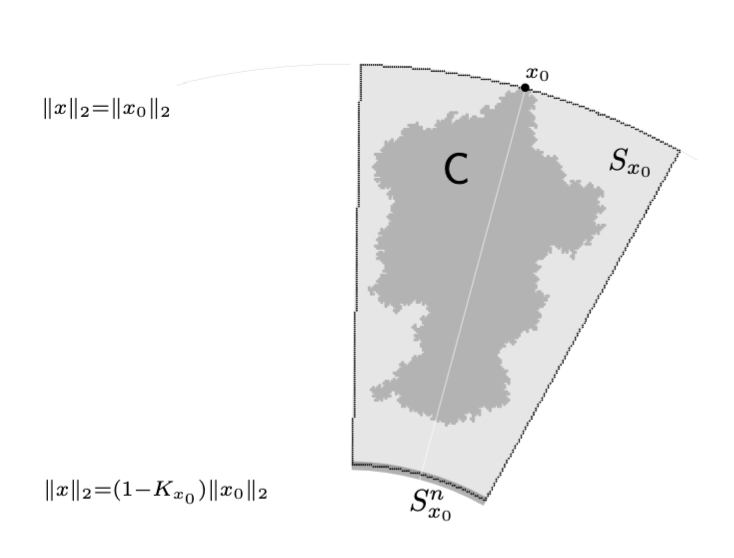}}
  \caption{The set $\mathsf{C}$, depicted above as the darker shaded region, is the largest hole in the ball $B(t_0)$. The set $S_{x_0}$ is a sector (lighter shaded region) centered on the vertex $x_0$ with maximal Euclidean norm among all those in $\mathsf{C}$. The boundary segment of $S_{x_0}$ nearest to the origin, $S_{x_0}^n$, is also shaded.}
  \label{fig: fig_7}
\end{figure}

%When we show that $\mathsf{C}$ cannot cross the near side of $S_{x_0}$, we will use a bound on passage time differences established in \cite[Prop.~3.7]{DHL17} under the uniform curvature assumption. The result is that for some $c_{23},C_{24},c_{25}>0$, any $z \in \mathbb{R}^d$ with $\|z\|_2=1$, and any $k,\ell \geq 0$ with $k \geq \ell$,
%\begin{equation}\label{eq: busemann}
%\mathbb{P}(T(0,kz) - T(0,\ell z) \geq c_{23}(k-\ell)) \geq 1-C_{24} e^{-(k-\ell)^{c_{25}}}.
%\end{equation}

To start the proof, we let $s>0$ and define the events
\[
E_1(s) = \left\{ \frac{1}{2} \mathcal{B} \subset \frac{1}{t} \widetilde{B}(t) \subset 2 \mathcal{B} \text{ for all } t \geq s \right\}
\]
and
\[
E_2(s) = \left\{ \tau_e \leq C_{20} \log t \text{ for all } e \text{ with an endpoint in }3t\mathcal{B} \text{ and all } t \geq s\right\}.
\]
In the definition of $E_1(s)$, we recall the notation $\widetilde{B}(t) = B(t) + [0,1)^d$ from the introduction. We have
\begin{align*}
\mathbb{P}(M(t) \geq (\log t)^{3C_{18}} \text{ for some }t \geq s) &\leq \mathbb{P}(E_1(s)^c) + \mathbb{P}(E_2(s)^c) \\
&+ \mathbb{P}\left(E_1(s) \cap E_2(s) \cap \{M(t) \geq (\log t)^{3C_{18}} \text{ for some } t \geq s\}\right).
\end{align*}
By the shape theorem in \eqref{eq: shape_theorem}, $\mathbb{P}(E_1(s)^c) \to 0$ as $s \to \infty$. To estimate $\mathbb{P}(E_2(s)^c)$, we write $\mathbb{P}(\tau_e > C_{20} \log n) \leq \mathbb{E}e^{\alpha \tau_e} / e^{\alpha C_{20}\log n}$ for the $\alpha$ in \eqref{eq: exponential_moments}, so
\[
\mathbb{P}(\tau_e > C_{20} \log n \text{ for some } e \text{ with an endpoint in }3n\mathcal{B}) \leq C_{21}n^2 n^{-C_{20} \alpha}.
\]
By a union bound, $\mathbb{P}(\tau_e > C_{20} \log n \text{ for some } e \text{ with an endpoint in }3n \mathcal{B} \text{ and some } n \geq N) \to 0$ as $N \to \infty$ if we choose $C_{20} > 4\alpha$. By increasing $C_{20}$ further, this implies that $\mathbb{P}(E_2(s)^c) \to 0$ as $s \to \infty$.

From the above arguments, we obtain
\begin{align}
&\lim_{s \to \infty} \mathbb{P}(M(t) \geq (\log t)^{3C_{18}} \text{ for some }t \geq s) \nonumber \\
=~& \lim_{s \to \infty} \mathbb{P}\left(E_1(s) \cap E_2(s) \cap \{M(t) \geq (\log t)^{3C_{18}} \text{ for some } t \geq s\}\right). \label{eq: new_start}
\end{align}
To show the limit in \eqref{eq: new_start} is zero, we use the sector construction from the proof idea above. Fix an outcome in the event in the probability in \eqref{eq: new_start} and let $t_0 \geq s$ be any value of $t$ for which $M(t) \geq (\log t)^{3C_{18}}$. Choose $x_0$ as any vertex with maximal Euclidean norm in a bounded component $\mathsf{C}$ of $B(t_0)^c$ with the largest number of vertices, and let $S_{x_0},J_{x_0},K_{x_0}$ be as in \eqref{eq: S_x_0_def} and \eqref{eq: K_x_0_def}. We first argue that for large $s$,
\begin{equation}\label{eq: containment}
\mathsf{C} \text{ contains a vertex in }S_{x_0}^c.
\end{equation}
To do this, we note that there exists $c_{22}>0$ such that 
\begin{equation}\label{eq: balls_def}
[0,c_{22}] \subset \{\|w\|_2 : w \in \mathcal{B}\} \subset \left[0,c_{22}^{-1}\right].
\end{equation}
Because $x_0$ is adjacent to $B(t_0)$ and $E_1(s)$ occurs, we have 
\[
\|x_0\|_2 \leq 1+ \max_{x \in B(t_0)} \|x\|_2  \leq 1+ 2 t_0 \max_{x \in \mathcal{B}} \|x\|_2 \leq 1 + 2c_{22}^{-1} t_0.
\]
As $x_0 \in B(t_0)^c$, we have $\|x_0\|_2 \geq (t_0/2)\max_{x \in \mathcal{B}} \|x\|_2 \geq c_{22}t_0/2$. 
In summary,
\begin{equation}\label{eq: t_0_relation}
\frac{c_{22}}{2}t_0 \leq \|x_0\|_2 \leq 1 + \frac{2}{c_{22}} t_0.
\end{equation}
Now for a contradiction, assume that $\mathsf{C} \subset S_{x_0}$. Then from \eqref{eq: S_x_0_volume}, we get 
\begin{equation}\label{eq: M_t_0_pre_bound}
M(t_0) \leq C_{19} (\log \|x_0\|_2)^{2C_{18}}.
\end{equation}
Combining this with \eqref{eq: t_0_relation}, we obtain
\[
M(t_0) \leq C_{19} (\log (1+2c_{22}^{-1}t_0))^{2C_{18}}.
\]
This contradicts $M(t_0) \geq (\log t_0)^{3C_{18}}$ for large $s$ because $t_0 \geq s$, and shows \eqref{eq: containment}.

We have now shown that for our outcome in the probability in \eqref{eq: new_start}, \eqref{eq: containment} holds. Let $\gamma$ be a path contained in $\mathsf{C}$ starting at $x_0$ that ends at a vertex outside of $S_{x_0}$; we may assume only its final vertex, say $p_0$, is outside of $S_{x_0}$. Let $\gamma'$ be the continuous plane curve produced by following $\gamma$ from $x_0$ to its last point $p_0'$ on the boundary of $S_{x_0}$ (directly before $\gamma$ touches $p_0$). We examine the possibility that $p_0'$ is on the left or right sides of $S_{x_0}$, or on the near side. 

The first case is that $p_0'$ is in the near side
\[
S_{x_0}^n = \{v \in S_{x_0} : \|v\|_2 =  (1-K_{x_0})\|x_0\|_2\}.
\]
If this holds, let $x_0' = (1-K_{x_0})x_0$, which is in $S_{x_0}^n$; we will show that $T(0,x_0')$ is abnormally large. (Here we use the definition $T(y,z) = T([y],[z])$, where $[y]$ is the point of $\mathbb{Z}^d$ with $y \in [y]+[0,1)^d$, and similarly for $z$.) 

	\begin{figure}[h]
\hbox{\hspace{5.5cm}\includegraphics[width=0.4\textwidth, trim={0cm 0cm 0cm 0cm}, clip]{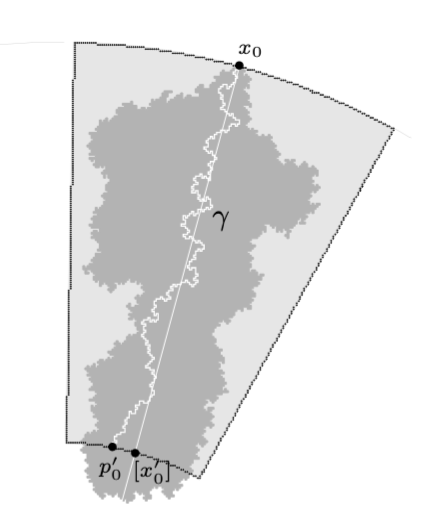}}
  \caption{The first case of the argument supposes that $\mathsf{C}$ exits the sector $S_{x_0}$ through its near side $S_{x_0}^n$. The path $\gamma$ in $\mathsf{C}$ starts at $x_0$, intersects the boundary of $S_{x_0}^n$ first at $p_0' \in S_{x_0}^n$, and it ends immediately after $p_0'$ at a vertex $p_0 \notin S_{x_0}$ (not pictured). Above, $x_0' = (1-K_{x_0})x_0$, and $[x_0']$ is the closest lattice point to $x_0'$.}
  \label{fig: fig_8}
\end{figure}

Because $p_0 \in \mathsf{C} \subset B(t_0)^c$,
\begin{equation}\label{eq: masta_marinara}
T(0,x_0') = T(0,p_0) + (T(0,x_0') - T(0,p_0)) > t_0 + (T(0,x_0') - T(0,p_0)).
\end{equation}
For large $s$, the points $x_0'$ and $p_0$ are in $3t_0\mathcal{B}$, and by occurrence of $E_2(s)$, there exists a path from $[x_0']$ to $p_0$ with $\|[x_0']-p_0\|_1$ many edges whose weights are at most $C_{20} \log t_0 \leq C_{20}\log (2c_{22}^{-1}\|x_0\|_2)$ (see \eqref{eq: t_0_relation}). This gives $T(0,x_0') - T(0,p_0) \geq -(C_{20}\log (2c_{22}^{-1}\|x_0\|_2))\|[x_0']-p_0\|_1$. However $\|[x_0']-p_0\|_1 \leq \|x_0'-p_0'\|_1 + 3 \leq \sqrt{2}\|x_0'-p_0'\|_2 + 3$, and $x_0',p_0'$ are in $S_{x_0}^n$, so if $s$ is large, then $\|x_0'-p_0'\|_2 \leq J_{x_0} \|x_0\|_2 =  (\log \|x_0\|_2)^{C_{18}-3}$. Together, for large $s$,
\begin{align*}
T(0,x_0') - T(0,p_0) &\geq - (C_{20} \log (2c_{22}^{-1}\|x_0\|_2)) (3 + \sqrt{2} (\log \|x_0\|_2)^{C_{18}-3}) \\
&\geq - (\log \|x_0\|_2)^{C_{18}-1}.
\end{align*}
Putting this in \eqref{eq: masta_marinara} gives
\begin{equation}\label{eq: masta_marinara_2}
T(0,x_0') > t_0 - (\log \|x_0\|_2)^{C_{18}-1}.
\end{equation}

To use \eqref{eq: masta_marinara_2}, we relate the left side to $T(0,x_0)$. Although $x_0$ is not in $B(t_0)$, it is the endpoint of an edge that has an endpoint in $3t_0\mathcal{B}$, so since $E_2(s)$ occurs, $T(0,x_0) \leq t_0 + C_{20}\log t_0 \leq t_0 + C_{20}\log(2c_{22}^{-1} \|x_0\|_2)$. With \eqref{eq: masta_marinara_2}, we obtain for large $s$
\begin{equation}\label{eq: masta_marinara_3}
T(0,x_0)- T(0,x_0') \leq C_{20} \log (2c_{22}^{-1} \|x_0\|_2)+(\log \|x_0\|_2)^{C_{18}-1} < 2 (\log \|x_0\|_2)^{C_{18}-1}.
\end{equation}
We now use a bound on passage time differences established in \cite[Prop.~3.7]{DHL17} under the uniform curvature assumption. The result is that for some $c_{23},C_{24},c_{25}>0$, any $z \in \mathbb{R}^d$ with $\|z\|_2=1$, and any $k,\ell \geq 0$ with $k \geq \ell$,
%\begin{equation}\label{eq: busemann}
\begin{equation}\label{eq: DLW_bound}
\mathbb{P}(T(0,kz) - T(0,\ell z) \geq c_{23}(k-\ell)) \geq 1-C_{24} e^{-(k-\ell)^{c_{25}}}.
\end{equation}
%\end{equation}
We put $z = x_0/\|x_0\|_2$, $k = \|x_0\|_2$, and $\ell = \|x_0'\|_2 = (1-K_{x_0})\|x_0\|_2$ to produce the bound
\begin{equation}\label{eq: busemann_applied}
\mathbb{P}\left(T(0,x_0) - T(0,x_0') < c_{23} (\log \|x_0\|_2)^{C_{18}}\right) \leq C_{24} \exp\left( - (\log \|x_0\|_2)^{C_{18}c_{25}}\right).
\end{equation}
If we define the event $G(s)$ to be
\[
G(s) = \left\{ T(0,x_0) - T(0,x_0') \geq 2 (\log \|x_0\|_2)^{C_{18}-1} \text{ for all } x_0 \in \mathbb{Z}^d \text{ with } \|x_0\|_2 \geq \frac{c_{22}}{2}s \right\},
\]
then, by \eqref{eq: t_0_relation} and \eqref{eq: masta_marinara_3}, if $p_0'$ is in the near side $S_{x_0}^n$, then $G(s)^c$ must occur, and by \eqref{eq: busemann_applied}, we get
\begin{align*}
\mathbb{P}(G(s)^c) &\leq \sum_{\|x_0\|_2 \geq \frac{c_{22}}{2}s} \mathbb{P}\left(T(0,x_0) - T(0,x_0') < 2 (\log \|x_0\|_2)^{C_{18}-1}\right) \\
&\leq C_{24} \sum_{\|x_0\|_2 \geq \frac{c_{22}}{2}s} \exp\left( - ( \log \|x_0\|_2)^{C_{18}c_{25}}\right).
\end{align*}
Here we have used that for large $s$, $2(\log \|x_0\|_2)^{C_{18}-1} < c_{23}(\log \|x_0\|_2)^{C_{18}}$. Assuming $C_{18}$ is chosen larger than $c_{25}^{-1}$, we get $\mathbb{P}(G(s)^c) \to 0$ as $s \to \infty$. In summary, we can return to \eqref{eq: new_start} and write
\begin{align}
&\lim_{s \to \infty} \mathbb{P}(M(t) \geq (\log t)^{3C_{18}} \text{ for some }t \geq s) \nonumber \\
=~& \lim_{s \to \infty} \mathbb{P}\left(E_1(s) \cap E_2(s)\cap G(s) \cap \{M(t) \geq (\log t)^{3C_{18}} \text{ for some } t \geq s\}\right), \label{eq: new_start_again}
\end{align}
observing now that any outcome in the event in the probability in \eqref{eq: new_start_again} must have the property that $p_0'$ is on the union of the left and right sides of $S_{x_0}$:
\begin{equation}\label{eq: last_case}
|\theta(x_0,p_0')| = J_{x_0}.
\end{equation}

This brings us to deal with the second case, that \eqref{eq: last_case} holds in our outcome. Here, the idea is that geodesics (optimal paths in the definition of $T(x,y)$---these exist a.s.~from \cite[Thm.~4.2]{ADH17}) between some point nearby $x_0$ and the origin must avoid (``go around'') the component $\mathsf{C}$, and therefore deviate significantly from the straight line connecting the point to the origin. This is unlikely due to geodesic wandering estimates from \cite{N95}. 

Our two possible ``nearby'' points are $y_0,z_0 \in \mathbb{R}^2$, defined to have $\|y_0\|_2 = \|z_0\|_2 = (1+K_{x_0})\|x_0\|_2$, $\theta(y_0,x_0) =  J_{x_0}/2$, and $\theta(z_0,x_0) = -J_{x_0}/2$. Let $A_{x_0}^{(i)}, i=1,2$ be defined as follows.
\begin{enumerate}
\item $A_{x_0}^{(1)}$ is the event that some geodesic from $[y_0]$ to 0 has a point $x \in \mathbb{R}^2$ with $\|x\|_2 \geq (1-K_{x_0})\|x_0\|_2$ and $\theta(x,x_0) = 0$ or $J_{x_0}$.
\item $A_{x_0}^{(2)}$ is the event that some geodesic from $[z_0]$ to 0 has a point $x \in \mathbb{R}^2$ with $\|x\|_2 \geq (1-K_{x_0})\|x_0\|_2$ and $\theta(x,x_0) = -J_{x_0}$ or $0$. 
\end{enumerate}

	\begin{figure}[h]
\hbox{\hspace{5cm}\includegraphics[width=0.4\textwidth, trim={0cm 0cm 0cm 0cm}, clip]{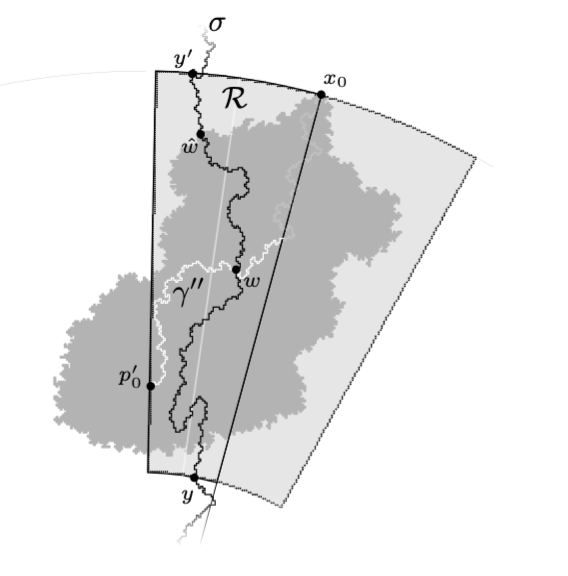}}
  \caption{In the figure, the region $\mathcal{R}$ is the left half of the sector $S_{x_0}$. In the second case considered for the argument, $\mathsf{C}$ exits $S_{x_0}$ through one of its sides, the side of $\mathcal{R}$ above. The path $\gamma$ plays an analogous role to the first case of the argument, excepting that $p_0'$ is no longer on the near boundary of $S_{x_0}$, and contains a subpath $\gamma''$ spanning opposite sides of $\mathcal{R}$. Under the event $A_{x_0}^{(1)}$, planarity forces a geodesic $\sigma$ joining $[y_0]$ (not pictured) to the origin to cross $\gamma''$ at a vertex $w$. The path $\sigma$ is further decomposed at the last point $z'$ on $\sigma$ with $\|z'\|_2 = \|x_0\|_2$ and the first point $z$ on $\sigma$ with $\|z\|_2 = (1-K_{x_0})\|x_0\|_2$, denoted $y'$ and $y$ respectively.}
  \label{fig: fig_9}
\end{figure}

We claim that because \eqref{eq: last_case} holds, 
\begin{equation}\label{eq: at_least_one}
\text{at least one of }A_{x_0}^{(1)}\text{ or }A_{x_0}^{(2)} \text{ occurs.}
\end{equation} 
To see why, let us assume first that $\theta(p_0',x_0) = J_{x_0}$. Then $\gamma'$, which we defined in the paragraph following \eqref{eq: M_t_0_pre_bound}, contains a segment $\gamma''$ which crosses the region 
\[
\mathcal{R} = \left\{v \in \mathbb{R}^2 : \frac{\|v\|_2}{\|x_0\|_2} \in  [(1-K_{x_0}),1], ~\theta(v,x_0) \in [0,J_{x_0}]\right\}
\]
between its two side boundaries; see Fig.~\ref{fig: fig_9}. This is because $\gamma'$ cannot exit $S_{x_0}$ through the far or near boundaries. Assume for a contradiction that $A_{x_0}^{(1)}$ does not occur, and let $\sigma$ be any geodesic from $[y_0]$ to $0$. Observe that for large $s$, we have $\theta([y_0],x_0) \in (0,J_{x_0})$. The segment of $\sigma$ from $[y_0]$ to its first point $y$ with $\|y\|_2 = (1-K_{x_0})\|x_0\|_2$ cannot contain any $x$ with $\theta(x,x_0) = 0$ or $J_{x_0}$, so it must contain a segment $\sigma'$ of $\sigma$ (starting at its last point $y'$ with $\|y'\|_2 = \|x_0\|_2$ before $y$ and ending at $y$) that crosses $\mathcal{R}$ from its far boundary to its near boundary. By planarity, $\sigma'$ must intersect $\gamma'$, and they must intersect at a vertex $w$.
%
%
%
%Because $\|[y_0]\|_2 > \|x_0\|_2$, there must be a segment of $\sigma$, say $\sigma'$, which is contained in the strip $\{v \in \mathbb{R}^2 : v \cdot x_0/\|x_0\|_2 \in [(1-K_{x_0})\|x_0\|_2,\|x_0\|_2]\}$ and crosses from its far boundary to its near boundary. If we assume for a contradiction that $A_{x_0}^{(1)}$ does not occur, then all vertices $x$ on $\sigma'$ must have $\theta(x,x_0) \in [0,J_{x_0}]$. That is, $\sigma'$ is a path that crosses the region $\mathcal{R}$ from its far boundary to its near boundary. By planarity, $\sigma'$ must intersect $\gamma''$ at some vertex $w$. 
%
We know $w \in \mathsf{C}$, so $T(0,w) > t_0$. Furthermore, $T(0,y_0) \geq T(0,w)$, so $[y_0] \notin B(t_0)$. But $[y_0] \notin \mathsf{C}$ by maximality of $x_0$, so $[y_0]$ is in a different component of $B(t_0)^c$. Starting from $[y_0]$, the geodesic $\sigma$ must therefore touch some $\hat w \in B(t_0)$ before it reaches $w$. This gives a contradiction because then $t_0 \geq T(0,\hat w) \geq T(0,w) > t_0$. We conclude that $A_{x_0}^{(1)}$ occurs. If we suppose that $\theta(p_0',x_0) = - J_{x_0}$ instead, a similar argument shows that $A_{x_0}^{(2)}$ occurs.

Returning to \eqref{eq: new_start_again}, the last paragraph plus a union bound gives
\[
\lim_{s \to \infty} \mathbb{P}(M(t) \geq (\log t)^{3C_{18}} \text{ for some }t \geq s) \leq \lim_{s \to \infty} \sum_{x_0 \in \mathbb{Z}^2 : \|x_0\|_2 \geq \frac{c_{22}}{2}s} \mathbb{P}(A_{x_0}^{(1)} \cup A_{x_0}^{(2)}). 
\]
To complete the proof of Theorem~\ref{thm: curvature_upper_bound}, we will show that this limit is zero, and to do this, we will prove that
\begin{equation}\label{eq: final_bound}
\sum_{x_0 \in \mathbb{Z}^2} \mathbb{P}(A_{x_0}^{(1)}) < \infty.
\end{equation}
A symmetric argument will establish the same bound for the sum of $\mathbb{P}(A_{x_0}^{(2)})$, and this will finish the proof.

Assertion \eqref{eq: final_bound} will follow from a lemma that summarizes some estimates from \cite{N95}. If $x,y \in \mathbb{Z}^d$, we write
\[
\text{out}(y,x) = \{z \in \mathbb{Z}^d : T(y,z) = T(y,x) + T(x,z)\}
\]
for the set of vertices $z$ such that a geodesic from $y$ to $z$ goes through $x$. The lemma states that with high probability, vertices in $\text{out}(0,x)$ have small angle from $x$. In \cite{N95}, this is used to show that the origin has a ``$r^{-1/4}$-straight geodesic tree.'' The argument from \cite{N95} assumes that the distribution of $\tau_e$ is continuous, but this is not needed. Only the uniform curvature assumption is required. Recall Definition~\ref{def: uniform_curvature}, which introduces the number $\eta$.
\begin{Lemma}\label{lem: geo_straight}
Let $p \in (0,1/(2\eta))$. There exist $C_{26},c_{27}>0$ such that for any $r\geq 1$,
\[
\mathbb{P}(|\theta(x,z)| \leq C_{26}\|x\|_2^{-p} \text{ for all } z \in \text{out}(0,x) \text{ and } x \text{ with } \|x\|_2 \geq r) \geq 1 - C_{26}\exp\left(-r^{c_{27}}\right).
\]
\end{Lemma}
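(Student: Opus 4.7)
The approach follows Newman's argument from \cite{N95}, combining Kesten-type concentration of passage times around the norm $g$ with the uniform curvature of $\partial\mathcal{B}$. The key observation is that if $z \in \text{out}(0,x)$, then $T(0,z) = T(0,x) + T(x,z)$, so on the typical event where $T$ concentrates around $g$, the triangle excess $g(x) + g(z-x) - g(z)$ is small; uniform curvature then forces the normalized vectors $x/g(x)$ and $(z-x)/g(z-x)$ to be nearly aligned, hence $|\theta(x,z)|$ to be small. Although \cite{N95} assumes continuity of the distribution of $\tau_e$ for uniqueness of geodesics, that hypothesis can be dropped here because the statement is phrased in terms of the set $\text{out}(0,x)$ rather than a unique geodesic tree.

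I would first import the standard concentration estimate: under \eqref{eq: exponential_moments}, for any small $\epsilon_0 > 0$ there exist $c_0, C_0 > 0$ such that
\[
\mathbb{P}\bigl(|T(0,x) - g(x)| \geq \|x\|_2^{1/2+\epsilon_0}\bigr) \leq C_0 \exp(-\|x\|_2^{c_0}) \qquad \text{for all large } \|x\|_2.
\]
Applied to $T(0,x)$, $T(x,z)$, and $T(0,z)$ with $z \in \text{out}(0,x)$, this gives, on an event of probability at least $1 - 3C_0\exp(-R^{c_0})$ with $R = \max(\|x\|_2, \|z-x\|_2, \|z\|_2)$,
\[
g(x) + g(z-x) - g(z) \leq 3R^{1/2+\epsilon_0}.
\]

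Next, normalize $\bar x = x/g(x)$, $\bar v = (z-x)/g(z-x) \in \partial\mathcal{B}$, set $s = g(x)/(g(x)+g(z-x))$, and $w = s\bar x + (1-s)\bar v$. A brief computation yields $1 - g(w) = (g(x)+g(z-x)-g(z))/(g(x)+g(z-x))$, and applying Definition~\ref{def: uniform_curvature} with $z_1 = \bar x$, $z_2 = \bar v$ produces
\[
\min(s,1-s)\, g(\bar v - \bar x) \leq \left( \frac{3R^{1/2+\epsilon_0}}{c\,(g(x)+g(z-x))} \right)^{1/\eta}.
\]
Since $z = g(x)\bar x + g(z-x)\bar v$, a direct trigonometric computation (using that $g$ is equivalent to $\|\cdot\|_2$) yields $|\theta(x,z)| \leq C(1-s)\,g(\bar v - \bar x)$. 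To produce the required bound $C_{26}\|x\|_2^{-p}$, I would split into two cases. If $\|z\|_2 \leq 2\|x\|_2$, then $R \leq C\|x\|_2$, and either $s$ is bounded away from $0$ and $1$ (so curvature gives $|\theta(x,z)| \leq C\|x\|_2^{-(1/2-\epsilon_0)/\eta}$), or $\|z-x\|_2$ is so short that $|\theta(x,z)| \leq C\|z-x\|_2/\|x\|_2$ controls it directly. If $\|z\|_2 > 2\|x\|_2$, then $\|z-x\|_2 \geq \|z\|_2/2$ and $R \sim \|z\|_2$, and the same computation yields a bound of order $\|z\|_2^{-p} \leq \|x\|_2^{-p}$.

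Finally, I would take a union bound over dyadic shells $\{\|x\|_2 \in [2^k, 2^{k+1})\}$ and likewise in $z$. The polynomial count $2^{O(d(k+\ell))}$ is absorbed by the tail $3C_0\exp(-2^{\min(k,\ell) c_0})$, and summing over $k, \ell \geq \log_2 r$ yields the claimed bound $1 - C_{26}\exp(-r^{c_{27}})$. The hardest step is the curvature-to-angle conversion: carefully tracking the dependence on $s$ across its different regimes and verifying that the exponent $(1/2 - \epsilon_0)/\eta$ can be pushed to any $p < 1/(2\eta)$ by choosing $\epsilon_0$ sufficiently small. Newman's original treatment bundles this into a uniform analysis of the geodesic tree; the case split above is equivalent and slightly more transparent.
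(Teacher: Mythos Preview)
Your near-regime argument ($\|z\|_2 \le 2\|x\|_2$) is fine, but the far-regime claim is where the proposal breaks. When $\|z\|_2 > 2\|x\|_2$ one has $s = g(x)/(g(x)+g(z-x)) \lesssim \|x\|_2/\|z\|_2 < 1/2$, so uniform curvature bounds only $s\,g(\bar v - \bar x)$, whereas the angle obeys $|\theta(x,z)| \le C(1-s)\,g(\bar v - \bar x)$. Dividing through gives
\[
|\theta(x,z)| \;\le\; \frac{C}{s}\Bigl(\frac{R^{1/2+\epsilon_0}}{g(x)+g(z-x)}\Bigr)^{1/\eta} \;\le\; C'\,\frac{\|z\|_2}{\|x\|_2}\,\|z\|_2^{-(1/2-\epsilon_0)/\eta},
\]
and the $\|z\|_2$-exponent $1-(1/2-\epsilon_0)/\eta$ is positive for any $\eta>1/2$ (in particular for the standard $\eta=2$), so the bound diverges as $\|z\|_2\to\infty$. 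A direct check for the Euclidean ball confirms this: the triangle-excess constraint $g(x)+g(z-x)-g(z)\le C\|z\|_2^{1/2+\epsilon_0}$ permits $|\theta(x,z)|$ as large as $c\,\|z\|_2^{1/4}/\|x\|_2^{1/2}$, which exceeds $\|x\|_2^{-p}$ once $\|z\|_2$ is a sufficiently high power of $\|x\|_2$. Your case split is therefore not equivalent to Newman's treatment.

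What is missing is the iteration along the geodesic that the paper (following \cite{N95}) actually performs. One introduces sectors $C_x=\{v:g(v)\in[g(x)-g(x)^{1-\eta p},\,2g(x)],\ |\theta(v,x)|\le g(x)^{-p}\}$ and shows, via essentially your concentration-plus-curvature input, that with stretched-exponential probability every geodesic through $x$ first exits $C_x$ through its \emph{outer} face $\{g>2g(x)\}$ rather than the side or inner faces. Given $z\in\text{out}(0,x)$, one then records successive outer-exit points $x=x_0,x_1,\dots$ along the geodesic, so that $g(x_i)\ge 2g(x_{i-1})$ and $|\theta(x_i,x_{i-1})|\le C\,g(x_{i-1})^{-p}$; the telescoped angle $|\theta(x,z)|\le\sum_i|\theta(x_i,x_{i-1})|$ is then a convergent geometric series bounded by $C'\|x\|_2^{-p}$ uniformly in $z$. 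Your one-step triangle-excess estimate is the right single-scale input, but the multiscale chaining is the essential idea you are missing for the far regime.
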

\begin{proof}
The proof is nearly the same as that of \cite[Prop.~3.2]{N95}, so we omit some details. For a vertex $x \neq 0$, let $C_x$ be the sector portion 
\[
C_x = \{z \in \mathbb{Z}^d : g(z) \in [g(x) - g(x)^{1-\eta p}, 2 g(x)], |\theta(z,x)| \leq g(x)^{-p}\}.
\]
The vertices in the boundary set $\{y \in C_x^c : \exists z \in C_x \text{ such that } \|z-y\|_1 = 1\}$ split into three sets: $\partial_i C_x$ is those $y$ with $g(y) < g(x)-g(x)^{1-\eta p}$, $\partial_o C_x$ is those $y$ with $g(y) > 2g(x)$, and $\partial_s C_x$ is those $y$ with $\theta(x,y) > g(x)^{-p}$. Let $G_x$ be the event $\{\text{out}(0,x) \cap (\partial_i C_x \cup \partial_s C_x) \neq \emptyset\}$. (The function $g$ was defined below \eqref{eq: shape_theorem}.) Then the argument leading to \cite[Eq.~(3.3)]{N95} gives that for some $C_{28},c_{29}$, we have $\mathbb{P}(G_x) \leq C_{28}\|x\|_2^d \exp\left( - c_{29}\|x\|_2^{1/2-\eta p}\right)$. (The only difference is that \cite{N95} takes $\eta = 2$ but we have general $\eta$.) By a union bound, if $c_{30} < 1/2 - \eta p$,
\begin{equation}\label{eq: G_x_bound}
\mathbb{P}(G_x \text{ occurs for some } x \in \mathbb{Z}^d \text{ with } \|x\|_2 \geq r) \leq C_{31}\exp\left( - r^{c_{30}}\right).
\end{equation}

Fix an outcome in the event $\cap_{\|x\|_2 \geq r} G_x^c$ and let $x \in \mathbb{Z}^d$ with $\|x\|_2 \geq r$. If $z \in \text{out}(0,x)$, consider a geodesic from $0$ to $z$ that contains $x$, and define a sequence of points inductively by $x_0=x$, and for $i \geq 1$, $x_i$ is the first point of the geodesic after $x_{i-1}$ that lies in $\partial_o C_{x_{i-1}}$. (It must touch this set and not the set $\partial_i C_{x_{i-1}} \cup \partial_s C_{x_{i-1}}$ if it leaves $C_{x_{i-1}}$ because $G_{x_{i-1}}^c$ occurs.) If such a point does not exist for a particular $i = I$, because the geodesic does not leave $C_{x_{i-1}}$ before touching $z$, we set $x_I = x_{I+1} = \dots = z$. Because $x_i$ is adjacent to $C_{x_{i-1}}$, we have $|\theta(x_i,x_{i-1})| \leq C_{32} / \|x_{i-1}\|_2^p$ , and so 
\[
|\theta(z,x)| \leq \sum_{i=1}^\infty |\theta(x_i,x_{i-1})| \leq C_{32} \sum_{i=1}^I \|x_{i-1}\|_2^{-p}.
\]
However, for $i=1, \dots, I-1$, we have $\|x_{i-1}\|_2 \geq C_{33}^{i-1} \|x\|_2$ for some $C_{33}>1$, since $x_{i-1} \in \partial_o C_{x_{i-2}}$. Therefore $|\theta(z,x)| \leq C_{32}\|x\|_2^{-p} \sum_{i=1}^\infty C_{33}^{-p(i-1)}$. In other words, for $C_{26} = C_{32}\sum_{i=1}^\infty C_{33}^{-p(i-1)}$, any outcome in $\cap_{\|x\|_2 \geq r} G_x^c$ has $|\theta(z,x)| \leq C_{26}\|x\|_2^{-p}$ so long as $\|x\|_2 \geq r$ and $z \in \text{out}(0,x)$. The estimate \eqref{eq: G_x_bound} finishes the proof.
\end{proof}

Using Lemma~\ref{lem: geo_straight}, we can show \eqref{eq: final_bound}, and therefore finish the proof of Theorem~\ref{thm: curvature_upper_bound}. Suppose that $A_{x_0}^{(1)}$ occurs. Choose a point $x \in \mathbb{R}^2$ such that $\|x\|_2 \geq (1-K_{x_0})\|x_0\|_2$ and $\theta(x,x_0) = 0$ or $J_{x_0}$, but that $x$ is on a geodesic from $[y_0]$ to 0. Let $x'$ be a vertex on this geodesic such that $\|x-x'\|_1 \leq 1$. The law of sines from trigonometry implies that if $\theta_{[y_0]}(0,x')$ is the angle between $0$ and $x'$ as measured from $[y_0]$, then
\begin{equation}\label{eq: law_of_sines}
\|x'\|_2 \sin |\theta([y_0],x')| = \|[y_0]-x'\|_2 \sin |\theta_{[y_0]}(0,x')|.
\end{equation}
To estimate these quantities, we observe first that for large $\|x_0\|_2$, we have 
\begin{equation}\label{eq: x_bound}
\|x'\|_2 \geq \|x_0\|_2/2.
\end{equation}
Next, because $|\theta(x,y_0)| = J_{x_0}/2$, we have 
\begin{equation}\label{eq: angle_lower_bound}
|\theta([y_0],x')| \in \left( \frac{J_{x_0}}{3}, 2 \frac{J_{x_0}}{3} \right)
\end{equation}
so long as $\|x_0\|_2$ is large enough. In particular, if $\|x_0\|_2$ is large, then $|\theta([y_0],x')|$ is small, and so
\begin{equation}\label{eq: sin_angle_lower_bound}
\sin|\theta([y_0],x)| \geq \frac{|\theta([y_0],x')|}{2} \geq \frac{J_{x_0}}{6}.
\end{equation}

%For an upper bound on this angle, we use Lem.~\ref{lem: geo_straight}. For $u \in \mathbb{Z}^d$ and $p \in (0,1/(2\eta))$ fixed, write $A_u(r)$ for the event described in Lem.~\ref{lem: geo_straight}, translated in the natural way so that the origin is mapped to $u$. Precisely, if $T_u$ is the translation of $\mathbb{R}^d$ such that $T_u(0) = u$, then $E_u(r)$ is the event that the image configuration $(\tau_{T_u^{-1}(e)})$ is in the event described in Lem.~\ref{lem: geo_straight}. Putting $u=0$ (we will use more general $u$ soon), if the event $A_0(\|x_0\|_2/2)$ occurs, then 
%\begin{equation}\label{eq: angle_upper_bound}
%|\theta([y_0],x)| \leq C_{26}\|x\|_2^{-p} \leq 2^p C_{26}\|x_0\|_2^{-p}.
%\end{equation}
%In particular, if $\|x_0\|_2$ is large, then $|\theta([y_0],x)|$ must be small, and therefore from \eqref{eq: angle_lower_bound},
%\begin{equation}\label{eq: sin_angle_lower_bound}
%\sin|\theta([y_0],x)| \geq \frac{|\theta([y_0],x)|}{2} \geq \frac{J_{x_0}}{6}.
%\end{equation}

The term $\sin |\theta_{[y_0]}(0,x')|$ can be bounded using Lemma~\ref{lem: geo_straight}. For $u \in \mathbb{Z}^d$ and $p \in (0,1/(2\eta))$ fixed, write $F_u(r)$ for the event described in Lem.~\ref{lem: geo_straight}, translated in the natural way so that the origin is mapped to $u$. Precisely, if $T_u$ is the translation of $\mathbb{R}^d$ such that $T_u(0) = u$, then $F_u(r)$ is the event that the image configuration $(\tau_{T_u^{-1}(e)})$ is in the event described in Lem.~\ref{lem: geo_straight}. We observe that $\|[y_0]-x'\|_2 \geq \|y_0-x\|_2 - \|[y_0]-y_0\|_2 - \|x-x'\|_2 \geq J_{x_0} \|x_0\|_2/2 - 3$, so if $\|x_0\|_2$ is large and $F_{[y_0]}(r)$ occurs for $r = J_{x_0}\|x_0\|_2/3$, then we must have 
\begin{equation}\label{eq: newman_bound}
|\theta_{[y_0]}(0,x')| \leq C_{26}\|[y_0]-x'\|_2^{-p}.
%\leq \frac{C_{26}}{(K_{x_0}\|x_0\|_2-\sqrt{2})^p},
\end{equation}
Putting this, \eqref{eq: x_bound}, and \eqref{eq: sin_angle_lower_bound} into \eqref{eq: law_of_sines} produces
%\begin{equation}\label{eq: almost_end_angle}
\begin{equation}\label{eq: another_almost_end_angle}
\frac{1}{12}(\log \|x_0\|_2)^{C_{18}-3} = \frac{\|x_0\|_2 J_{x_0}}{12} \leq \|[y_0]-x'\|_2 \sin |\theta_{[y_0]}(0,x')| \leq C_{26} \|[y_0]-x'\|_2^{1-p}.
\end{equation}
For large $\|x_0\|_2$, we conclude
\begin{equation}\label{eq: almost_end_angle}
\|y_0-x\|_2 \geq c_{34} (\log \|x_0\|_2)^{\frac{C_{18}-3}{1-p}}.
\end{equation}

To proceed from \eqref{eq: almost_end_angle}, we assume for a contradiction that $F_{[y_0]}(r)$ occurs (so that \eqref{eq: almost_end_angle} holds) and consider two cases. If $\|x\|_2 \leq \|y_0\|_2$, then $\|x\|_2/\|x_0\|_2 \in [1-K_{x_0},1+K_{x_0}]$. If $\theta(x,x_0) = 0$, then 
\[
\|y_0-x\|_2 \leq \|y_0-x_0\|_2 + \|x-x_0\|_2 \leq \left(\frac{J_{x_0}}{2}+K_{x_0}\right)\|x_0\|_2 + K_{x_0}\|x_0\|_2 \leq 3K_{x_0}\|x_0\|_2.
\]
By symmetry, the inequality $\|y_0-x\|_2 \leq 3K_{x_0}\|x_0\|_2$ also holds if $\theta(x,x_0) = J_{x_0}$. Putting it in \eqref{eq: almost_end_angle}, we find
\[
3(\log \|x_0\|_2)^{C_{18}} \geq c_{34} (\log \|x_0\|_2)^{\frac{C_{18}-3}{1-p}},
\]
which is false if $C_{18}>3/p$ and $\|x_0\|_2$ is large. Otherwise, if $\|x\|_2 \geq \|y_0\|_2$, then 
\begin{equation}\label{eq: last_x_bound}
\|x'\|_2 \geq \|[y_0]\|_2 - 1 - \sqrt{2} \geq \|[y_0]\|_2 - 3.
\end{equation} 
Because $|\theta_{[y_0]}(0,x')| + |\theta(x',[y_0])| + |\theta_{x'}(0,[y_0])| = \pi$, we see for large $\|x_0\|_2$ from \eqref{eq: angle_lower_bound} and \eqref{eq: newman_bound} that $|\theta_{x'}(0,[y_0])| \geq 3\pi/4$, and so $\cos | \theta_{x'}(0,[y_0])| \leq -1/\sqrt{2}$. The law of cosines along with \eqref{eq: another_almost_end_angle} and \eqref{eq: last_x_bound} then gives for large $\|x_0\|_2$
\begin{align*}
\|[y_0]\|_2^2 &= \|x'\|_2^2 + \|[y_0]-x'\|_2^2 - 2\|x'\|_2\|[y_0]-x'\|_2 \cos \theta_{x'}(0,[y_0]) \\
&\geq \|x'\|_2^2 + \sqrt{2} \|x'\|_2 \|[y_0]-x'\|_2 \\
&\geq \|x'\|_2^2 + 7 \|x'\|_2 \\
&\geq (\|[y_0]\|_2-3)^2 + 7\|[y_0]\|_2 - 21.
\end{align*}
This is a contradiction if $\|x_0\|_2$ is large.

We conclude that if $\|x_0\|_2$ is sufficiently large, then $A_{x_0}^{(1)}\subset F_{[y_0]}(r)^c$ with $r = J_{x_0}\|x_0\|_2/3$. Lem.~\ref{lem: geo_straight} gives the bound
\[
\mathbb{P}(A_{x_0}^{(1)}) \leq \mathbb{P}(F_{[y_0]}(r)^c) \leq C_{26}\exp\left( -  (J_{x_0}\|x_0\|_2/3)^{c_{27}}\right).
\]
This is summable over $x_0 \in \mathbb{Z}^2$ so long as $C_{18} > 3 + c_{27}^{-1}$. This completes the proof of \eqref{eq: final_bound}.

\section{Proof of Thm.~\ref{thm: no_curvature_upper_bound}} \label{sec: no_curvature_upper_bound}

The proof of Thm.~\ref{thm: no_curvature_upper_bound} is like that of Thm.~\ref{thm: curvature_upper_bound}, and will use similar constructions, so we give fewer details and focus on the modifications needed to apply the argument. There are two main differences. First, instead of using the bound \eqref{eq: DLW_bound} on passage time differences (which requires the uniform curvature assumption), we will use a general concentration inequality. Second, instead of using Lem.~\ref{lem: geo_straight} on the straightness of geodesics (also requiring curvature), we will use Kesten's lemma. 

The concentration inequality states the there exists $C_{35}>0$ such that for all large $x \in \mathbb{Z}^d$,
\begin{equation}\label{eq: general_concentration}
\mathbb{P}\left( |T(0,x) - g(x)| \geq C_{35}\sqrt{g(x) \log g(x)}\right) \leq \|x\|_1^{-100}.
\end{equation}
This inequality follows from standard results. First, it suffices to prove it with $\sqrt{g(x) \log g(x)}$ replaced by $\sqrt{\|x\|_1 \log \|x\|_1}$. In this form, it follows from \cite[Prop.~1.1]{DK16}, which says that for some $C_{36} > 0$, we have $0 \leq \mathbb{E}T(0,x) - g(x) \leq C_{36} \sqrt{\|x\|_1 \log \|x\|_1}$, and \cite[Thm.~1.1]{DHS14}, which says that $\mathbb{P}(|T(0,x) - \mathbb{E}T(0,x)| \geq \lambda \sqrt{\|x\|_1/\log \|x\|_1}) \leq e^{-c_{37}\lambda}$ for some constant $c_{37}>0$ and all $\lambda \geq 0$ and nonzero $x \in \mathbb{Z}^d$. From these two, we just have to choose $\lambda = 2C_{35} \log \|x\|_1$ for large enough $C_{35}$.

The second tool, Kesten's lemma \cite[Prop.~5.8]{aspects}, states that there exist $a,c_{38}>0$ such that
\begin{equation}\label{eq: kestens_lemma}
\mathbb{P}(\exists \text{ edge-self-avoiding path } \gamma \text{ containing } 0 \text{ with } \#\gamma \geq n \text{ but } T(\gamma) < an) \leq e^{-c_{38}n}.
\end{equation}
Here, $\#\gamma$ is the number of edges in $\gamma$. This result will allow us to show in \eqref{eq: kesten_application} that if a geodesic deviates too far from a straight line, it must have a long segment with high passage time.

As in the proof of Thm.~\ref{thm: curvature_upper_bound}, we define events $E_i(s)$ for $s \geq 0$ and a constant $C_{39}>0$ as
\begin{align*}
E_1(s) &= \left\{ \frac{1}{2} \mathcal{B} \subset \frac{1}{t} \widetilde{B}(t) \subset 2 \mathcal{B} \text{ for all } t \geq s \right\} \\
E_2(s) &= \{ \tau_e \leq C_{39} \log t \text{ for all }e \text{ with an endpoint in }3t \mathcal{B} \text{ and all } t \geq s\} \\
E_3(s) &= \left\{ |T(0,x) - g(x)| \leq C_{35} \sqrt{g(x) \log g(x)} \text{ for all integer }x \in 3t\mathcal{B} \setminus ((t/3)\mathcal{B}) \text{ and all } t \geq s\right\}.
\end{align*}
As in \eqref{eq: new_start}, for some $C_{39}$ large enough, and any $C_{40}>0$,
\begin{align}
&\lim_{s \to \infty} \mathbb{P}\left(M(t) \geq C_{40} t \log t \text{ for some }t \geq s\right) \nonumber \\
=~& \lim_{s \to \infty} \mathbb{P}\left(E_1(s) \cap E_2(s) \cap \left\{M(t) \geq C_{40}t \log t \text{ for some } t \geq s\right\}\right). \label{eq: again_new_start}
\end{align}
Using \eqref{eq: general_concentration} with a union bound, we obtain
\begin{equation}\label{eq: E_3_bound}
\mathbb{P}(E_3(s)^c) \leq \sum_{x \in ((s/3)\mathcal{B})^c} \|x\|_1^{-100} \to 0 \text{ as } s \to \infty.
\end{equation}
Last, we let $E_4(s)$ be the event that, for all $t \geq s$, and all vertices $x \in 3t\mathcal{B} \setminus ((t/3)\mathcal{B})$, any edge-self-avoiding path $\Gamma$ containing $x$ with at least $(12C_{35}/a) \sqrt{g(x) \log g(x)}$ many edges satisfies $T(\Gamma) \geq 12C_{35} \sqrt{g(x) \log g(x)}$. To prove that
\begin{equation}\label{eq: E_4_prob}
\mathbb{P}(E_4(s)^c) \to 0 \text{ as } s \to \infty,
\end{equation}
we use \eqref{eq: kestens_lemma} with a union bound. We obtain
\[
\mathbb{P}(E_4(s)^c) \leq \sum_{x \in \left( \frac{s}{3} \mathcal{B}\right)^c} e^{-c_{38} 12C_{35} a^{-1} \sqrt{g(x)\log g(x)}} \to 0 \text{ as } s \to \infty,
\]
This shows \eqref{eq: E_4_prob}. Putting \eqref{eq: E_3_bound} and \eqref{eq: E_4_prob} into \eqref{eq: again_new_start}, we find
\begin{align}
&\lim_{s \to \infty} \mathbb{P}\left(M(t) \geq C_{40}t \log t \text{ for some }t \geq s\right) \nonumber \\
=~& \lim_{s \to \infty} \mathbb{P}\left(\left( \cap_{i=1}^4 E_i(s) \right) \cap \{M(t) \geq C_{40}t \log t \text{ for some } t \geq s\}\right). \label{eq: again_new_start2}
\end{align}

The rest of the proof serves to show that if $C_{40}$ is large, then \eqref{eq: again_new_start2} is zero. To do this, we choose an outcome in the event in \eqref{eq: again_new_start2}, and let $t_0 \geq s$. Pick $x_0$ as any vertex with maximal value of $g(x_0)$ in a bounded component $\mathsf{C}$ of $B(t_0)^c$ with the largest number of vertices. Analogously to \eqref{eq: S_x_0_def}, let
\[
S_{x_0} = \left\{ v \in \mathbb{R}^2 : |\theta(v,x_0)| \leq J_{x_0} \text{ and } 1 - K_{x_0} \leq \frac{g(v)}{g(x_0)} \leq 1 \right\},
\]
where
\[
K_{x_0} = \frac{3C_{35} \sqrt{g(x_0) \log g(x_0)}}{g(x_0)}
\]
and 
\[
J_{x_0} = \frac{64}{a c_{22}} K_{x_0}.
\]
%{\color{green} Change this definition. Use something more explicit that we can use at the end.} is the smallest number such that if $v$ satisfies $(1-K_{x_0})g(x_0) \leq g(v)$ and $|\theta(v,x_0)| = J_{x_0}$, then
%\begin{equation}\label{eq: infinity_constraint}
%\|v-x_0\|_\infty \geq 10C_{106} \sqrt{g(x_0) \log g(x_0)}.
%\end{equation}
%(Observe that $J_{x_0} \to 0$ as $\|x_0\|_\infty \to \infty$.) 
By a similar argument to that which gave \eqref{eq: containment}, if $s$ is large, because $M(t_0) \geq C_{40}t_0 \log t_0$,
\[
\mathsf{C} \text{ contains a vertex in } S_{x_0}^c,
\]
so long as $C_{40}$ is fixed to be large enough. Because of this, we can find a path $\gamma$ contained in $\mathsf{C}$ starting at $x_0$ that ends at a vertex outside of $S_{x_0}$; we may assume only its final vertex, say $p_0$, is outside of $S_{x_0}$. We also let $\gamma'$ be the continuous plane curve produced by following $\gamma$ from $x_0$ to its last point $p_0'$ on the boundary of $S_{x_0}$ (directly before $\gamma$ touches $p_0$). As we have done in the last section, we must exclude the possibility that $p_0'$ is on the left or right sides of $S_{x_0}$, or on the near side. The point $p_0'$ cannot be on the far side only because $g(x_0)$ is maximal among vertices in $\mathsf{C}$.

The first case is that $p_0'$ is in the near side
\[
S_{x_0}^n = \{v \in S_{x_0} : g(v) = (1-K_{x_0})g(x_0)\}.
\]
If $s$ is large, then $p_0 \in 3t_0 \mathcal{B} \setminus ((t_0/3)\mathcal{B})$, so since $E_3(s)$ occurs, we have for some $C_{41}>0$
\begin{align*}
T(0,p_0) &\leq g(p_0) + C_{35}\sqrt{g(p_0) \log g(p_0)} \\
&\leq C_{41} + g(p_0') + C_{35}\sqrt{g(p_0') \log g(p_0')} \\
&= C_{41} + g(x_0) - 3C_{35} \sqrt{g(x_0) \log g(x_0)} + C_{35}\sqrt{g(p_0') \log g(p_0')} \\
&\leq g(x_0) - 2C_{35}\sqrt{g(x_0)\log g(x_0)}.
\end{align*}
Because $T(0,x_0) \geq g(x_0) - C_{35} \sqrt{g(x_0) \log g(x_0)}$, we obtain
\begin{equation}\label{eq: lower_to_contradict}
T(0,x_0) - T(0,p_0) \geq C_{35} \sqrt{g(x_0) \log g(x_0)}
\end{equation}
as long as $s$ is large. On the other hand, $p_0 \notin B(t_0)$, so $T(0,p_0) > t_0$. Furthermore, $x_0$ is an endpoint of an edge with an endpoint in $B(t_0)$, and this edge must have weight at most $C_{39}\log t_0$ because $E_2(s)$ occurs. Therefore
\[
T(0,x_0) - T(0,p_0) \leq C_{39}\log t_0 + t_0 - t_0 = C_{39} \log t_0.
\]
Because $t_0 \leq (2/c_{22}) \|x_0\|_2$ from \eqref{eq: t_0_relation}, this contradicts \eqref{eq: lower_to_contradict}.

The second case is that $p_0'$ satisfies $|\theta(x_0,p_0')| = J_{x_0}$. We will suppose that $\theta(x_0,p_0') = J_{x_0}$, as the other possibility is dealt with using a similar argument. Let $y_0 \in \mathbb{R}^2$ satisfy $\theta(y_0,x_0) = J_{x_0}/2$ and $g(y_0) = g(x_0)$, and choose a vertex $\bar{y}_0$ with $g(\bar{y}_0) > g(y_0)$ but $\|y_0 - \bar{y}_0\|_1 = 1$. Let $\sigma$ be any geodesic from $\bar{y}_0$ to 0. As in the proof of \eqref{eq: at_least_one}, as $\sigma$ proceeds from $\bar{y}_0$ to 0, planarity implies it must touch one of the rays
\[
R = \{v \in \mathbb{R}^2 : \theta(v,x_0) = 0\} \text{ or } R' = \{v \in \mathbb{R}^2 : \theta(v,x_0) = J_{x_0}\}
\]
before touching the set $B' = \{v : g(v) = (1-K_{x_0})g(x_0)\}$. Indeed, if this were false, then because the curve $\gamma'$ connecting $x_0$ to $p_0'$ must contain a segment crossing the region
\[
\mathcal{R} = \left\{ v \in \mathbb{R}^2 : \frac{g(v)}{g(x_0)} \in [(1-K_{x_0}),1], \theta(v,x_0) \in [0,J_{x_0}]\right\}
\]
between its two side boundaries, $\sigma$ would have to intersect $\gamma'$ at a vertex $w$. As in the last section, this gives a contradiction because $w \in \mathsf{C}$, so $T(0,w) > t_0$, but because $\sigma$ originates outside of $\mathsf{C}$, it must touch some $\hat{w} \in B(t_0)$ before reaching $w$, and so $t_0 \geq T(0,\hat{w}) \geq T(0,w) > t_0$.

Without loss of generality, we suppose that $\sigma$ touches some $p_1 \in R'$ before some $p_2 \in B'$. Let $\bar{p}_1$ be the vertex we encounter on $\sigma$ directly before $p_1$ as we proceed from $\bar{y}_0$ to 0, and let $\bar{p}_2$ be the vertex we encounter on $\sigma$ directly after $p_2$. Because $p_2 \in B'$, we have $g(p_2) = g(x_0) - 3 C_{35} \sqrt{g(x_0)\log g(x_0)}$. The event $E_2(s) \cap E_3(s)$ occurs, so for large $s$,
\begin{align*}
T(0,\bar{p}_2) &\geq g(\bar{p}_2) - C_{35}\sqrt{g(\bar{p}_2)\log g(\bar{p}_2)} \\
&\geq g(p_2) - C_{35}\sqrt{g(p_2) \log g(p_2)} - C_{42} \\
&= g(x_0) - 3C_{35} \sqrt{g(x_0)\log g(x_0)} - C_{35} \sqrt{g(p_2)\log g(p_2)} - C_{42} \\
&\geq g(x_0) - 4C_{35} \sqrt{g(x_0) \log g(x_0)}.
\end{align*}
Here, $C_{42}>0$ is a constant. Because $\bar{p}_1$ appears first on $\sigma$, we have $T(0,\bar{p}_1) \geq T(0,\bar{p}_2)$, so
\begin{equation}\label{eq: p_1_lower}
T(0,\bar{p}_1) \geq g(x_0) - 4C_{35} \sqrt{g(x_0) \log g(x_0)}.
\end{equation}

To obtain an upper bound on $T(0,\bar{p}_1)$, we use the occurrence of $E_2(s)\cap E_3(s)$ to estimate
\begin{align}
T(0,\bar{p}_1) = T(0,\bar{y}_0) - T(\bar{y}_0,\bar{p}_1) &\leq T(0,y_0) + C_{39}\log t_0 - T(\bar{y}_0, \bar{p}_1) \nonumber \\
&\leq g(x_0) + C_{35}\sqrt{g(x_0)\log g(x_0)} + C_{39} \log t_0 - T(\bar{y}_0, \bar{p}_1). \label{eq: T_bar_upper_bound}
\end{align}
Any path from $\bar{y}_0$ to $\bar{p}_1$ must have at least $\|\bar{y}_0 - \bar{p}_1\|_1$ many edges, and by \eqref{eq: balls_def}, if $s$ is large,
\begin{align*}
\|\bar{y}_0 - \bar{p}_1\|_1 \geq \|y_0-p_1\|_2 - 2 &\geq \sin\left( \frac{J_{x_0}}{2}\right) \|y_0\|_2 - 2 \\
&\geq  \frac{J_{x_0}}{4} \|y_0\|_2 - 2 \\
&= \frac{3}{4}\cdot \frac{64}{a c_{22}}C_{35}\sqrt{g(x_0)\log g(x_0)} \frac{\|y_0\|_2}{g(y_0)} - 2 \\
&\geq \frac{3}{8} c_{22} \cdot \frac{64}{ac_{22}}C_{35} \sqrt{g(x_0)\log g(x_0)}.
\end{align*}
If $s$ is large, this is bigger than $(12C_{35}/a)\sqrt{g(\bar{y}_0) \log g(\bar{y}_0)}$, so since $E_4(s)$ occurs, 
\begin{equation}\label{eq: kesten_application}
T(\bar{y}_0,\bar{p}_1) \geq 12C_{35} \sqrt{g(\bar{y}_0) \log g(\bar{y}_0)} \geq 6C_{35}\sqrt{g(x_0) \log g(x_0)}.
\end{equation}
Returning to \eqref{eq: T_bar_upper_bound}, for large $s$, we get
\[
T(0,\bar{p}_1) \leq g(x_0) + \left(C_{35} - 6C_{35}\right) \sqrt{g(x_0) \log g(x_0)} + C_{39}\log t_0.
\]
This contradicts \eqref{eq: p_1_lower} for large $s$, since $t_0 \leq (2/c_{22})\|x_0\|_2$ from \eqref{eq: t_0_relation}.

\end{document}